\newtheorem{theorem}{Theorem}[section]
\newtheorem{corollary}[theorem]{Corollary}
\newtheorem{lemma}[theorem]{Lemma}
\theoremstyle{definition}
\theoremstyle{remark}
\newtheorem{remark}[theorem]{\bf{Remark}}
\theoremstyle{remark}
\newtheorem{example}[theorem]{\bf{Example}}
\numberwithin{equation}{section}
\newcommand{\beas} {\begin{eqnarray*}}
\newcommand{\eeas} {\end{eqnarray*}}
\newcommand{\bes} {\begin{equation*}}
\newcommand{\ees} {\end{equation*}}
\newcommand{\be} {\begin{equation}}
\newcommand{\ee} {\end{equation}}
\newcommand{\bea} {\begin{eqnarray}}
\newcommand{\eea} {\end{eqnarray}}
\newcommand{\R}{\mathbb R}
\newcommand{\C}{\mathbb C}
\newcommand{\what}{\widehat}
\begin{document}

\title[Converse of Fatou's theorem] {On Pointwise converse of Fatou's theorem for Euclidean and Real hyperbolic spaces}

\author{Jayanta Sarkar}

\address{(Jayanta Sarkar) Stat-Math Unit, Indian Statistical Institute, 203, B.T. Road, Kolkata-700108, India}
\email{jayantasarkarmath@gmail.com}
\thanks{The author is supported by a research fellowship from Indian Statistical Institute.}


\begin{abstract}
In this article, we extend a result of L. Loomis and W. Rudin, regarding boundary behavior of positive harmonic functions on the upper half space $\R_+^{n+1}$. We show that similar results remain valid for more general approximate identities. We apply this result to prove a result regarding boundary behavior of nonnegative eigenfunctions of the Laplace-Beltrami operator on real hyperbolic space $\mathbb H^n$.
We shall also prove a generalization of a result regarding large time behavior of solution of the heat equation proved in \cite{Re}. We use this result to prove a result regarding asymptotic behavior of certain eigenfunctions of the Laplace-Beltrami operator on real hyperbolic space $\mathbb H^n$.
\end{abstract}

\subjclass[2010]{Primary 31B25, 44A35; Secondary 31A20, 43A85}

\keywords{Eigenfunctions of Laplacian, Real hyperbolic spaces, Symmetric derivative, Convolution integral, Fatou-type theorems}

\maketitle
\section{Introduction}
Given a complex measure (or a signed measure) $\mu$ on $\R^n$ its Poisson integral $P\mu$ on the upper half space $\R^{n+1}_+=\{(x,y):x\in\R^n,y>0\}$ is defined by the convolution $P\mu(x,y)=\mu\ast P_y(x)$, $x\in\R^n$, $y\in (0,\infty)$ where the Poisson kernel $P_y$ is given by
\begin{equation*}
P_y(x)=c_n\frac{y}{(y^2+\|x\|^2)^{\frac{n+1}{2}}},\:\:\:\:\: c_n=\pi^{-(n+1)/2}\Gamma\left(\frac{n+1}{2}\right).
\end{equation*}
All the measures that we shall consider in this article are assumed to be Borel measures.
Our motivation is a classical result of Fatou which relates the differentiability property of $\mu$ at a boundary point $x_0\in\R^n$ with the boundary behavior of $P\mu$ at $x_0$. Given a complex measure or a Radon measure $\mu$ on $\R^n$, the symmetric derivative of $\mu$ at a point $x_0\in\R^n$ is given by the limit
\begin{equation*}
D_{sym}\mu(x_0)=\lim_{r\to 0}\frac{\mu(B(x_0,r))}{m(B(x_0,r))},
\end{equation*}
provided, of course, that this limit exists, where $B(x_0,r)$ denotes ball of radius $r$ with center at $x_0$ with respect to the Euclidean metric and $m$ denotes the Lebesgue measure of $\R^n$. In this paper, by a Radon measure, we shall always mean a signed measure $\mu$ on $\R^n$ whose total variation $|\mu|$ is locally finite, that is, $|\mu|(K)$ is finite for every compact set $K\subset\R^n$.
For a complex measure or a Radon measure $\mu$ on $\R^n$ with well defined Poisson integral the result of Fatou says that if for some $x_0\in\R^n$ and $L\in \C$ we have $D_{sym}\mu(x_0)=L$ then
\begin{equation*}
\lim_{y\to 0}P\mu(x_0,y)=L.
\end{equation*}
A generalization of this result was proved in \cite{Sa} for more general approximate identities instead of the Poisson kernel (see Theorem \ref{fatour}).
A natural question in this regard is to ask about the validity of the converse implication : does
\begin{equation*}
\lim_{y\to 0}P\mu(x_0,y)=L,
\end{equation*}
imply that $D_{sym}\mu(x_0)=L$? It is important to note that the question is about the existence of limits at a single point $x_0$ and is not related to almost everywhere existence of the above limits. For $n=1$, it was shown by Loomis that this implication is false in general, but it remains true if $\mu$ is assumed to be a positive measure \cite{L}. Here and hereafter, a positive measure shall mean a nonnegative measure. Rudin \cite{Ru} generalized the result of Loomis for positive measures on $\R^n$ and proved the following.
\begin{theorem}\label{rud}
Suppose $\mu$ is a positive measure on $\R^n$ with well defined Poisson integral. If there exists $x_0\in\R^n$ and $L\in\R$ such that
\begin{equation*}
\lim_{y\to 0}P\mu(x_0,y)=L,
\end{equation*}
then $D_{sym}\mu(x_0)=L$.
\end{theorem}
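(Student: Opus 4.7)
The plan is to reduce the statement to a Tauberian-type uniqueness result for a Stieltjes transform on $[0,\infty)$, by exploiting the radial symmetry of $P_y$ at $x_0$ and the positivity of $\mu$. After translating, assume $x_0 = 0$ and let $\phi(r) = \mu(B(0,r))$; the goal becomes $\phi(r)/(\omega_n r^n) \to L$, where $\omega_n = m(B(0,1))$.

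I would first establish the a priori bound $\phi(r) = O(r^n)$ near $0$: since $P_y(x) \geq c_n 2^{-(n+1)/2}\, y^{-n}$ for $\|x\| \leq y$, the positivity of $\mu$ gives
\begin{equation*}
P\mu(0,y) \,\geq\, c_n\, 2^{-(n+1)/2}\, y^{-n}\, \phi(y),
\end{equation*}
so $\limsup_{r \to 0}\phi(r)/r^n$ is finite. Next, introduce the dilated positive measures $\nu_r$ on $\R^n$ defined by $\nu_r(E) = r^{-n}\mu(rE)$; the a priori bound makes $\{\nu_r\}_{0<r<r_0}$ uniformly bounded on compact sets, so by Banach--Alaoglu there is a vague subsequential limit $\nu_{r_k} \to \nu$. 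The scaling identity $P\nu_r(0,y) = P\mu(0,ry)$, together with a uniform tail estimate for $P_y$ at infinity, would let me pass to the limit and conclude $P\nu(0,y) = L$ for every $y > 0$.

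To identify $\nu$, I would push it forward under the radial projection $x \mapsto \|x\|$ to obtain a positive Borel measure $\sigma$ on $[0,\infty)$ satisfying
\begin{equation*}
c_n \int_0^\infty \frac{y}{(y^2+r^2)^{(n+1)/2}}\, d\sigma(r) \,=\, L \qquad \text{for every } y > 0.
\end{equation*}
A Mellin transform in the variable $y$ sends this Stieltjes-type kernel to a nonvanishing Beta/Gamma-function factor times $r^{s-n}$ in the strip of convergence, which yields injectivity of the transform on the class of locally finite positive measures on $[0,\infty)$. Comparing with the radial pushforward of $L\, m$, namely $d\sigma(r) = L\, n\omega_n\, r^{n-1}\, dr$, then forces $\sigma$ to coincide with this measure; in particular $\nu(B(0,r)) = L\omega_n r^n$ for all $r > 0$.

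Since $\nu(\partial B(0,1)) = 0$, vague convergence yields $\mu(B(0,r_k))/r_k^n = \nu_{r_k}(B(0,1)) \to L\omega_n$. Because every vague subsequential limit of $\nu_r$ gives the same value for $\nu(B(0,1))$, the full limit exists and equals $L\omega_n$, so $D_{sym}\mu(0) = L$. The main obstacle is the Mellin injectivity step: the rescaling argument by itself does not pin down $\nu$ (angular rearrangements preserve $P\nu(0,y)$), and what rescues the argument is that $\nu(B(0,1))$ depends only on the radial pushforward $\sigma$, which the one-dimensional injectivity does determine. A secondary technical point is the uniform tail control required to pass $P\nu_r(0,y) \to L$ through the vague limit.
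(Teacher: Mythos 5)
Your route is genuinely different from the paper's. The paper (following Rudin) rewrites everything on the multiplicative group $(0,\infty)$: it shows $H\ast_{(0,\infty)}v=M\ast_{(0,\infty)}g$ with $v(y)=P\mu(x_0,y)$, $M(r)=\mu(B(x_0,r))/m(B(x_0,r))$ and $g$ built from the radial profile of the kernel, applies Wiener's Tauberian theorem for \emph{bounded} functions (after proving $M$ and $v$ bounded, exactly your a priori step), and finishes with a sandwich argument using compactly supported test functions and the monotonicity of $r\mapsto\mu(B(x_0,r))$. You instead blow up: extract vague subsequential limits of the dilates $\nu_r$, show the limit $\nu$ has constant Poisson transform $L$ at the origin, and identify $\nu$'s radial pushforward by a uniqueness theorem for the Stieltjes-type transform. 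The skeleton is sound: the a priori bound, the scaling identity $P\nu_r(0,y)=P\mu(0,ry)$, the uniform tail control (which does go through, using $\mu(B(0,T))=O(T^n)$ for small $T$ and $\int(1+\|x\|^2)^{-(n+1)/2}d\mu<\infty$ for the far range), and the portmanteau/subsequence argument at the end are all fine.

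The one genuine gap is the injectivity step, and the specific tool you propose does not work as stated. You cannot Mellin-transform the identity $c_n\int_0^\infty y(y^2+r^2)^{-(n+1)/2}\,d\sigma(r)=L$ in $y$: the left side is constant in $y$, so $\int_0^\infty L\,y^{s-1}dy$ diverges for every $s$; and on the measure side the relevant integral $\int_0^\infty r^{s-n}d\sigma(r)$ has empty convergence strip precisely because $\sigma([0,T])\asymp T^n$ both near $0$ and near $\infty$ (this non-integrability on the multiplicative group is exactly why Rudin needs Wiener's $L^\infty$ Tauberian theorem rather than Mellin inversion). The injectivity you need is nonetheless true and can be proved without Tauberian theory: write $(y^2+r^2)^{-(n+1)/2}=\Gamma(\tfrac{n+1}{2})^{-1}\int_0^\infty t^{(n-1)/2}e^{-ty^2}e^{-tr^2}\,dt$, so that $P\nu(0,y)$ becomes a Laplace transform in $y^2$ of $t\mapsto t^{(n-1)/2}G(t)$ with $G(t)=\int_0^\infty e^{-tr^2}d\sigma(r)$; finiteness of $P\nu(0,y)$ forces $G(t)<\infty$ for all $t>0$, and two applications of Laplace-transform uniqueness (first in $y^2$, then in $r^2$) identify $\sigma$ with $L\,n\,m(B(0,1))\,r^{n-1}dr$. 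With that substitution for the Mellin step, your argument closes; as written, the central step is asserted rather than proved.
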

Rudin's argument is based on an interesting application of Wiener's Tauberian theorem on the multiplicative group $(0,\infty)$. We refer the reader to \cite{BC, CD, Du, G1, Kh, Lo, UR} for related results. We now briefly describe the problems we are going to deal with in this paper. Suppose $\phi\in L^1(\R^n)$ be nonnegative with $\|\phi\|_1=1$. For $t>0$, we consider the usual approximate identity
\begin{equation*}
\phi_t(x)=t^{-n}\phi\left(\frac{x}{t}\right),\:\:\:\: x\in\R^n.
\end{equation*}
Suppose $\mu$ is a positive Radon measure on $\R^n$ such that the convolution $\mu\ast\phi_t$ is well defined for all small $t$. One can then ask under what conditions on the function $\phi$, the relation
\begin{equation*}
\lim_{t\to 0}\mu\ast \phi_t(x_0)=L,
\end{equation*}
implies that $D_{sym}\mu(x_0)=L$?
In this paper we will suggest a set of sufficient conditions on $\phi$ under which the above implication holds (see Theorem \ref{mt1}). Interestingly, it turns out that one of these conditions is also necessary (see Example \ref{nec1}). We will then use Theorem \ref{mt1} to prove a result analogous to that of Theorem \ref{rud} for nonnegative eigenfunctions of the Laplace-Beltrami operator on real hyperbolic space (see Theorem \ref{mtrealhyper}).

Another result which we will be discussed in this article was proved by Repnikov and Eidelman \cite{Re, Rp} regarding large time behavior of certain solutions of the heat equation. They proved, among other things, the following result.
\begin{theorem}\label{heatrep}
 Let $f\in L^{\infty}(\R^n)$, $x_0\in\R^n$, $L\in\C$. Then
 \begin{equation*}
 \lim_{r\to\infty}\frac{1}{m(B(x_0,r))}\int_{B(x_0,r)}f(x)\:dx=L,
 \end{equation*} if and only if
 \begin{equation*}
 \lim_{t\to\infty}f\ast h_t(x_0)=L,
 \end{equation*}
 where
 \begin{equation}\label{heat}
h_t(x)=(4\pi t)^{-\frac{n}{2}}e^{-\frac{\|x\|^2}{4t}},\:\:\:\: x\in\R^n,\:t>0,
\end{equation}
is the heat kernel.
\end{theorem}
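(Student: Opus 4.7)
The plan is to reduce both sides of the equivalence to a statement about a single auxiliary function on the multiplicative group $(0,\infty)$, then prove the forward direction by dominated convergence and the converse by Wiener's Tauberian theorem, in the spirit of Rudin's proof of Theorem \ref{rud}. After translating $x_0$ to the origin, denote the ball averages by
\begin{equation*}
A(r)\;=\;\frac{1}{m(B(0,r))}\int_{B(0,r)}f(y)\,dy,\qquad \nu_f(r)\;=\;\omega_n r^n A(r),
\end{equation*}
where $\omega_n=m(B(0,1))$. Writing $f\ast h_t(0)=(4\pi t)^{-n/2}\int_{\R^n}f(y)e^{-\|y\|^2/(4t)}\,dy$ in polar coordinates, integrating by parts against $\nu_f$ (boundary terms vanish because $|\nu_f(r)|\le\|f\|_\infty\omega_n r^n$ is absorbed by the Gaussian), and substituting $r=\sqrt{t}\,u$, one arrives at
\begin{equation*}
f\ast h_t(0)\;=\;c_n\int_0^\infty u^{n+1}e^{-u^2/4}\,A(\sqrt{t}\,u)\,du,
\end{equation*}
with $c_n=\omega_n/\bigl(2(4\pi)^{n/2}\bigr)$ chosen so that $c_n\int_0^\infty u^{n+1}e^{-u^2/4}\,du=1$ (a short gamma-function computation).

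For the direction ``ball averages imply heat convolution'', assume $A(r)\to L$. Since $|A|\le\|f\|_\infty$ and $u\mapsto u^{n+1}e^{-u^2/4}$ lies in $L^1(0,\infty)$, dominated convergence applied to the above identity gives $f\ast h_t(0)\to L$ as $t\to\infty$.

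For the converse, set $G(\eta)=A(e^\eta)-L$ and substitute $t=e^{2\eta}$, $u=e^v$ in the integral identity. The hypothesis $f\ast h_t(0)\to L$ transforms into
\begin{equation*}
\int_{-\infty}^\infty G(\eta+v)\,K(v)\,dv\;\longrightarrow\;0 \quad \text{as } \eta\to\infty,
\end{equation*}
where $K(v)=c_n e^{(n+2)v}\exp\bigl(-e^{2v}/4\bigr)\in L^1(\R)$. The function $G$ is bounded by $\|f\|_\infty+|L|$, and the elementary estimate $|A(r_1)-A(r_2)|\le 2\|f\|_\infty\bigl|1-(r_1/r_2)^n\bigr|$ implies that $G$ is uniformly continuous on $\R$. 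A direct change of variables identifies the Fourier transform of $K$ with a nonzero exponential multiple of $\Gamma\bigl((n+2-i\xi)/2\bigr)$, which vanishes nowhere on $\R$. Wiener's Tauberian theorem then yields $(G\ast\widetilde{\psi})(\eta)\to 0$ as $\eta\to\infty$ for every $\psi\in L^1(\R)$ (with $\widetilde{\psi}(v)=\psi(-v)$); choosing $\psi$ to be a narrow symmetric bump at the origin and invoking the uniform continuity of $G$ upgrades this to the pointwise statement $G(\eta)\to 0$, i.e., $A(r)\to L$ as $r\to\infty$.

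The principal obstacle is the Tauberian step. Beyond verifying the non-vanishing of the Mellin-type transform via the gamma function, the delicate point is upgrading the averaged convergence supplied by Wiener's theorem to pointwise convergence of $A(r)$. The uniform-continuity estimate for $A$, which uses only boundedness of $f$ and elementary ball geometry, is precisely what enables this final step.
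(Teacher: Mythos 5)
Your proposal is correct, and it takes a route that is related to but genuinely different from the paper's. The paper first proves a general two-kernel transfer theorem (Theorem \ref{mtheat}): writing $f\ast\phi_t(x_0)$ in polar coordinates reduces it to a convolution $f_0\ast_{(0,\infty)}g_\phi$ on the multiplicative group, where $f_0$ is the \emph{spherical} mean of $f$ about $x_0$; Wiener's theorem then transfers the limit from $g_\phi$ to $g_\psi$ for any admissible second kernel $\psi$. Theorem \ref{heatrep} follows by applying this twice, with $\phi$ the Gaussian and $\psi=m(B(0,1))^{-1}\chi_{B(0,1)}$ (whose Mellin transform $\omega_{n-1}\Gamma(n+iy)/\Gamma(n+1+iy)$ must also be checked to be nonvanishing). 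Because the ball average is itself of the form $f_0\ast_{(0,\infty)}g_\psi$, the paper's Wiener step lands directly on the desired quantity and no pointwise-recovery argument is needed. You instead convolve against the \emph{ball} average $A(r)$: this buys you a completely elementary forward direction (dominated convergence against the probability density $c_nu^{n+1}e^{-u^2/4}\,du$, with no Tauberian input and no need to verify nonvanishing for the ball kernel), at the cost that in the converse direction Wiener's theorem only gives you convergence of $G\ast\widetilde{\psi}$ for all $\psi\in L^1(\R)$, and you must upgrade this to pointwise convergence of $G$. Your uniform-continuity estimate $|A(r_1)-A(r_2)|\le 2\|f\|_\infty|1-(r_1/r_2)^n|$ is correct and does exactly this job; it plays the same structural role that monotonicity of $\mu$ plays in the paper's proof of Theorem \ref{mt1}, where the analogous upgrade for $M(r)$ is carried out via the inequalities (\ref{Mapprox1})--(\ref{Mapprox2}). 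All the computational claims you make (the normalization $c_n\int_0^\infty u^{n+1}e^{-u^2/4}\,du=1$, the vanishing of the boundary terms in the integration by parts, and $\widehat{K}(\xi)$ being a nonzero multiple of $\Gamma\bigl((n+2-i\xi)/2\bigr)$, hence zero-free) check out.
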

We will extend the above theorem for two different approximate identities $\{\phi_t\}$ and $\{\psi_t\}$ (see Theorem \ref{mtheat}). Precisely, we will find a set of sufficient conditions on the function $\phi$ such that for $f\in L^{\infty}(\R^n)$, $x_0\in\R^n$ and $L\in\C$,
\begin{equation*}
\lim_{t\to\infty} f\ast \phi_t(x_0)=L
\end{equation*}
implies that
\begin{equation*}
\lim_{t\to\infty} f\ast \psi_{t}(x_0)=L.
\end{equation*}
We will further show that one of these conditions is also necessary. We will then use this result to prove a theorem regarding asymptotic behavior of certain eigenfunctions of the Laplace-Beltrami operator on real hyperbolic space (see Theorem \ref{largetime}).
As such there does not exist any connection between Theorem \ref{rud} and Theorem \ref{heatrep} apart from the fact that they seem complementary to each other in some sense. However, from our viewpoint, the main reason for including both these results here is the fact that proof of both these results depend crucially on the Wiener Tauberian theorem of the multiplicative group $(0,\infty)$.

The organization of the paper is as follows. In the next section we will prove Theorem \ref{mt1} and Theorem \ref{mtheat}. In the last section we will describe the necessary prerequisites for real hyperbolic spaces and prove Theorem \ref{mtrealhyper} and Theorem \ref{largetime}.

\section{Euclidean spaces}
In this paper, unless we explicitly mention, $\phi:\R^n\to (0,\infty)$ will always stand for an integrable, radial and radially decreasing function, that is,
\begin{equation*}
\phi (x)\geq \phi(y),\:\:\:\:\text{if}\:\:\|x\|\leq\|y\|,
\end{equation*}
with
\begin{equation*}
\int_{\R^n}\phi (x)dm(x)=1.
\end{equation*}
From now onwards, whenever an integral is involved, we will write $dx$ instead of $dm(x)$ and hope that it will not create any confusion.
In addition to the above, in some of our results we will also assume the following comparison condition on the function $\phi$.
\begin{equation}\label{comp}
\sup\:\left\{\frac{\phi_t(x)}{\phi (x)}\mid t\in (0,1), \|x\|>1\right\}<\infty.
\end{equation}
\begin{remark}\label{existence} It was proved in \cite[P. 137]{Sa} that if $\mu$ is a signed measure or a complex measure and $\phi$ is a nonnegative, radially decreasing function on $\R^n$ then finiteness of $|\mu|\ast \phi_{t_0}(x_0)$ implies the finiteness of $|\mu|\ast\phi_t(x)$ for all $t\in (0,t_0)$ and for all $x\in \R^n$. Note also that if $|\mu|(\R^n)$ is finite then $\mu\ast \phi_{t}(x)$ is well defined for all $(x,t)\in\R^{n+1}_+$.
\end{remark}
The following are some simple examples of functions which satisfy the condition (\ref{comp}).
\begin{example}\label{example}
\begin{enumerate}
\item [i)] For $\alpha\geq n/2$ and $\beta\geq 0$, we define
\begin{equation*}
K(x)=\frac{1}{(1+\|x\|^2)^\alpha\log(2+\|x\|^{\beta})},\:x\in\R^n.
\end{equation*}
We have for $t\in(0,1)$ and $\|x\|>1$,
\begin{eqnarray*}
	\frac{K_{t}(x)}{K(x)}&=& t^{-n}\frac{t^{2\alpha}(1+\|x\|^2)^\alpha\log(2+\|x\|^{\beta})}{(t^2+\|x\|^2)^\alpha\log\left(2+\frac{\|x\|^{\beta}}{t^{\beta}}\right)}\\
&\leq& t^{2\alpha-n}\frac{(1+\|x\|^2)^\alpha\log(2+\|x\|^{\beta})}{\|x\|^{2\alpha}\log\left(2+\|x\|^{\beta}\right)}\\
&\leq& t^{2\alpha-n}\left(1+\frac{1}{\|x\|^2}\right)^\alpha.
\end{eqnarray*}
This shows that $K$ satisfies the comparison condition (\ref{comp}). In fact, in case of $\alpha>n/2$, we have the following stronger result.
\begin{equation*}
\lim_{t\to 0}\frac{K_{t}(x)}{K(x)}=0,
\end{equation*}
uniformly for $x\in\R^n\setminus B(0,\epsilon)$ for any $\epsilon>0$. In particular,
\begin{equation*}
P(x)=\frac{\Gamma((n+1)/2)}{\pi^{(n+1)/2}}(1+\|x\|^2)^{-\frac{n+1}{2}},\:\:\:\:x\in\R^n
\end{equation*}
satisfies the comparison condition (\ref{comp}).
\item[ii)] For positive real numbers $\alpha$ and $\beta$ we define
\begin{equation*}
G(x)=e^{-\alpha\|x\|^{\beta}},\:x\in\R^n.
\end{equation*}
For for $t\in(0,1)$ and $\|x\|>1$,
\begin{equation*}
\frac{G_{t}(x)}{G(x)}=t^{-n}e^{-\alpha\left(\frac{1}{t^{\beta}}-1\right)\|x\|^{\beta}}
\leq t^{-n}e^{-\alpha\left(\frac{1}{t^{\beta}}-1\right)}
=e^{\alpha}t^{-n}e^{-\frac{\alpha}{t^{\beta}}}.
\end{equation*}
Taking limit as $t\to 0$, we get
\begin{equation*}
\lim_{t\to 0}\frac{G_{t}(x)}{G(x)}=0,
\end{equation*}
uniformly for $x\in\R^n\setminus\overline{B(0,1)}$. Thus, $G$ satisfies the comparison condition (\ref{comp}). In particular, the Gaussian 
\begin{equation*}
w(x)=(4\pi)^{-\frac{n}{2}}e^{-\frac{\|x\|^2}{4}},\:x\in\R^n
\end{equation*}
satisfies the comparison condition (\ref{comp}).
\end{enumerate}\end{example}
The following generalization of the result of Fatou was proved in \cite{Sa}.
\begin{theorem}\label{fatour}
Suppose $\phi:\R^n\to (0,\infty)$ satisfies the following conditions,
\begin{enumerate}
\item $\phi$ is radial, radially decreasing measurable function with $\|\phi\|_1=1$.
\item $\phi$ satisfies the condition (\ref{comp}).
\end{enumerate}
Suppose $\mu$ is a Radon measure (or a complex measure) on $\R^n$ such that $|\mu|\ast \phi_{t_0}(x_1)$ is finite for some $t_0>0$ and $x_1\in\R^n$. If for some $x_0\in \R^n$ and $L\in [0,\infty)$ we have $D_{sym}\mu(x_0)=L$ then
\begin{equation*}
\lim_{t\to 0} \mu\ast\phi_t(x_0)=L.
\end{equation*}
\end{theorem}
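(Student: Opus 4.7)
The plan is to translate so that $x_0=0$, and to split the measure as $\mu=\mu_1+\mu_2$, where $\mu_1=\mu|_{B(0,s)}$ is compactly supported and $\mu_2=\mu|_{\R^n\setminus B(0,s)}$ is supported away from the origin, for some fixed $s\in(0,t_0)$. The choice of $s$ is made so that Remark \ref{existence} gives $|\mu|\ast\phi_s(0)<\infty$, which in turn guarantees $|\mu_i|\ast\phi_t(0)<\infty$ for $i=1,2$ and all $t\in(0,t_0)$. The two pieces are then handled by essentially different arguments.

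For the tail $\mu_2$, I would rescale the comparison hypothesis (\ref{comp}) via the substitution $u=x/s$, $\tau=t/s$ to obtain $\phi_t(x)\le M\phi_s(x)$ for every $\|x\|>s$ and $t\in(0,s)$. A standard argument for radial, radially decreasing $L^1$-functions shows $\|y\|^n\phi(y)\to 0$ as $\|y\|\to\infty$, so $\phi_t(x)\to 0$ as $t\to 0^+$ for every fixed $x\ne 0$. Since $\phi_s$ is $|\mu_2|$-integrable on the support of $\mu_2$, dominated convergence yields $\mu_2\ast\phi_t(0)\to 0$.

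For the compactly supported piece $\mu_1$ I would use the layer-cake decomposition of $\phi$: because $\phi$ is radial and radially decreasing, there is a positive Borel measure $\nu$ on $(0,\infty)$ with
\begin{equation*}
\phi(x)=\int_{(0,\infty)}\mathbf{1}_{B(0,r)}(x)\,d\nu(r),\qquad c_n\int_{(0,\infty)} r^n\,d\nu(r)=1,
\end{equation*}
where $c_n=m(B(0,1))$. Fubini (valid because $|\mu_1|\ast\phi_t(0)<\infty$) then rewrites
\begin{equation*}
\mu_1\ast\phi_t(0)=\int_{(0,\infty)}\frac{\mu_1(B(0,tr))}{m(B(0,tr))}\,c_n r^n\,d\nu(r).
\end{equation*}
Since $\mu$ and $\mu_1$ agree on small balls, $D_{sym}\mu_1(0)=L$, and the integrand converges pointwise to $L\,c_n r^n$ as $t\to 0^+$. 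For a dominating function I claim that $|\mu_1(B(0,u))|/m(B(0,u))$ is bounded uniformly for $u>0$: the symmetric derivative controls the behavior as $u\to 0^+$, while for $u$ bounded away from zero the numerator is bounded by $|\mu_1|(\R^n)<\infty$ and the denominator is bounded below. Dominated convergence against $Cc_n r^n\,d\nu\in L^1(d\nu)$ then produces $\mu_1\ast\phi_t(0)\to L\,c_n\int r^n\,d\nu=L$.

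The main subtlety I anticipate is the uniform bound on $|\mu_1(B(0,u))|/m(B(0,u))$ for signed or complex $\mu$: the variation $|\mu_1|(B(0,u))/m(B(0,u))$ may genuinely blow up as $u\to 0^+$, so one cannot simply estimate $\mu_1$ by $|\mu_1|$, and must instead exploit cancellation inside $\mu_1(B(0,u))$ itself, which is exactly the content of $D_{sym}\mu(0)=L$. Once this bound is in place, the layer-cake normalization $c_n\int r^n\,d\nu=1$ together with the two dominated convergence steps combine to give $\mu\ast\phi_t(0)\to L$, as required.
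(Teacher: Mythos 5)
Your argument is essentially correct, but note that the paper does not actually prove Theorem \ref{fatour}: it is imported from \cite{Sa}, so there is no internal proof to compare against. What you have written is a self-contained proof. Your handling of the tail $\mu_2$ coincides exactly with the computation the paper performs in Lemma \ref{reduce}: the rescaled form $\phi_t(x)\le M\phi_s(x)$ of (\ref{comp}) for $\|x\|>s$, $t\in(0,s)$, the decay $\|y\|^n\phi(y)\to 0$ from (\ref{rel2}), and dominated convergence against $\phi_s\in L^1(d|\mu|)$. Your handling of $\mu_1$ is the same ``convolution with a radial decreasing kernel equals a weighted average of $M(u)=\mu(B(0,u))/m(B(0,u))$'' identity that the paper exploits in the reverse direction in (\ref{hconv1}). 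You also correctly isolate the one point where signedness matters: the dominating function must control $|\mu_1(B(0,u))|/m(B(0,u))$ rather than $|\mu_1|(B(0,u))/m(B(0,u))$, and this is supplied near $0$ by the hypothesis $D_{sym}\mu(0)=L$ (a convergent function is bounded) and away from $0$ by $|\mu_1|(\R^n)<\infty$ together with the lower bound on the volume.

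One small repair is needed in the layer-cake step. The identity $\phi(x)=\int_{(0,\infty)}\chi_{B(0,r)}(x)\,d\nu(r)$ with open balls holds at every point only when the radial profile of $\phi$ is right-continuous; at a radius where the profile jumps, the right-hand side returns the right-hand limit, and the discrepancy is supported on a sphere that $\mu_1$ may charge. The clean fix is to use $\phi(x)=\int_0^{\infty}\chi_{\{\phi>\la\}}(x)\,d\la$, whose superlevel sets are balls that are open or closed depending on $\la$. This costs nothing: $D_{sym}\mu(0)=L$ also forces $\mu(\overline{B(0,u)})/m(B(0,u))\to L$ (let the radius of an open ball decrease to $u$, use continuity from above and $m(\partial B(0,u))=0$), and the uniform bound extends to closed balls by the same two-regime argument, so every subsequent step of your proof survives verbatim.
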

\begin{remark}
It was shown in \cite{Sa} that the theorem above fails in the absence of condition (\ref{comp}).
\end{remark}
Our main interest in this paper is to prove the converse implication for positive measures under appropriate hypothesis on the function $\phi$. Our first lemma shows that condition (\ref{comp}) can be used to reduce matters to the case of a finite positive measure $\mu$.
\begin{lemma}\label{reduce}
Suppose $\phi$ is as in Theorem \ref{fatour}. If $\mu$ is a positive Radon measure such that $\mu\ast\phi_{t_0}(0)$ is finite for some $t_0\in (0,\infty)$, then
\begin{equation}\label{reducemu}
\lim_{t\to 0}\mu\ast\phi_t(0)=\lim_{t\to 0}\tilde{\mu}\ast\phi_t(0),
\end{equation}
where $\tilde{\mu}$ is the restriction of $\mu$ on the closed ball $\overline{B(0,t_0)}$. Moreover,
\begin{equation}\label{reduced}
D_{sym}\mu(0)=D_{sym}\tilde{\mu}(0).
\end{equation}
\end{lemma}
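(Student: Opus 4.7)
The plan is to decompose $\mu = \tilde\mu + \nu$, where $\nu$ denotes the restriction of $\mu$ to the open exterior $\R^n \setminus \overline{B(0,t_0)}$, so that $\mu \ast \phi_t(0) = \tilde\mu \ast \phi_t(0) + \nu \ast \phi_t(0)$ holds whenever the terms make sense. The lemma then reduces to the single assertion $\lim_{t\to 0} \nu \ast \phi_t(0) = 0$, and (\ref{reduced}) will follow trivially from the fact that $\mu$ and $\tilde\mu$ agree on small balls.

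The first step is to rescale the comparison condition (\ref{comp}) so it applies on $\{\|x\| > t_0\}$ for $t \in (0, t_0)$ rather than on $\{\|x\|>1\}$ for $t\in(0,1)$. With $s = t/t_0$ and $y = x/t_0$, a direct computation using the definition $\phi_t(x) = t^{-n}\phi(x/t)$ gives
$$\frac{\phi_t(x)}{\phi_{t_0}(x)} = \frac{\phi_s(y)}{\phi(y)},$$
and the supremum on the right is finite by (\ref{comp}); call it $C$. Since $\phi$ is radial, $\phi_t(-x) = \phi_t(x)$, and I obtain
$$\nu \ast \phi_t(0) = \int_{\|x\|>t_0} \phi_t(x)\, d\nu(x) \leq C \int_{\|x\|>t_0} \phi_{t_0}(x)\, d\nu(x) \leq C\, \mu\ast\phi_{t_0}(0) < \infty,$$
so $C\phi_{t_0}$ serves as an integrable dominating function on the support of $\nu$.

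Next, I would show that $\phi_t(x) \to 0$ as $t \to 0$ for every fixed $x \neq 0$. Writing $\phi(x) = f(\|x\|)$ with $f$ decreasing, integrability of $\phi$ together with monotonicity forces $r^n f(r) \to 0$ as $r \to \infty$ (using $\int_r^{2r} f(s) s^{n-1} ds \geq f(2r) \cdot r^n(2^n - 1)/n$ and the fact that the tail of an integrable function vanishes). Hence
$$\phi_t(x) = t^{-n} f(\|x\|/t) = \|x\|^{-n}\bigl(\|x\|/t\bigr)^n f(\|x\|/t) \longrightarrow 0$$
as $t \to 0$. Combining this pointwise decay with the dominating function $C\phi_{t_0}$, the dominated convergence theorem yields $\nu \ast \phi_t(0) \to 0$, which establishes (\ref{reducemu}).

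For (\ref{reduced}), for every $r < t_0$ the ball $B(0,r)$ is contained in $\overline{B(0,t_0)}$, so $\mu(B(0,r)) = \tilde\mu(B(0,r))$; dividing by $m(B(0,r))$ and letting $r\to 0$ shows the two symmetric derivatives agree (and exist or fail to exist simultaneously). The only nontrivial technical point is the rescaling of (\ref{comp}), which is really just bookkeeping; the rest is dominated convergence together with the well-known tail decay $r^n f(r) \to 0$ for integrable radial decreasing functions.
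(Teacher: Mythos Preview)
Your proof is correct and follows essentially the same route as the paper's: both decompose the convolution over $\{\|x\|\le t_0\}$ and $\{\|x\|>t_0\}$, rescale the comparison condition (\ref{comp}) to obtain a bound $\phi_t(x)\le C\phi_{t_0}(x)$ on the exterior, invoke the tail decay $r^n f(r)\to 0$ for radial decreasing integrable functions to get pointwise convergence $\phi_t(x)\to 0$, and then apply dominated convergence with $\phi_{t_0}\in L^1(d\mu)$. Your rescaling step is slightly more direct than the paper's substitution $t\mapsto tt_0$, but the underlying computation is identical.
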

\begin{proof}
We write for $t\in (0,t_0)$,
\begin{eqnarray}
\mu\ast\phi_t(0)&=& \int_{\{x\in\R^n\mid \|x\|\leq t_0 \}}\phi_t(x)\:d\mu(x)+ \int_{\{x\in\R^n\mid \|x\|>t_0 \}}\phi_t(x)\:d\mu(x)\nonumber\\
&=&\tilde{\mu}\ast\phi_t(0)+\int_{\{x\in\R^n\mid \|x\|>t_0 \}}\phi_t(x)\:d\mu (x).\label{rel1}
\end{eqnarray}
Since $\phi$ is a radial function, we will write for the sake of simplicity $\phi(x)=\phi(r)$, whenever $\|x\|=r$. For any $r\in (0,\infty)$, we have
\begin{equation*}
\int_{r/2\leq \|x\|\leq r}\phi (x)\:dx\geq \omega_{n-1}\phi (r)\int_{r/2}^rs^{n-1}\:ds=A_nr^n\phi (r),
\end{equation*}
where $\omega_{n-1}$ is the surface area of the unit sphere $S^{n-1}$ and $A_n$ is a positive constant which depends only on the dimension.
Since $\phi$ is an integrable function, the integral on the left hand side converges to zero as $r$ goes to zero and infinity. Hence, it follows that
\begin{equation}
\lim_{\|x\|\to 0}\|x\|^n\phi (x)=\lim_{\|x\|\to \infty}\|x\|^n\phi (x)=0\label{rel2}.
\end{equation}
We denote the integral appearing on the right-hand side of (\ref{rel1}) by $I(t)$. Then, for $t\in (0,1)$
\begin{eqnarray}
I(tt_0)&=&(tt_0)^{-n}\int_{\{x\in\R^n\mid \|x\|>t_0 \}}\phi\left( \frac{x}{tt_0} \right)\:d\mu (x)\nonumber\\
&=&\int_{\{x\in\R^n\mid \|x\|>t_0 \}}\frac{\left(\frac{\|x\|}{tt_0}\right)^n\phi\left( \frac{x}{tt_0} \right)}{\|x\|^n\phi_{t_0}(x)}\phi_{t_0} (x)\:d\mu(x)\label{rel3}.
\end{eqnarray}
From (\ref{rel2}) we get that
\begin{equation*}
\lim_{t\to 0}\left(\frac{\|x\|}{tt_0}\right)^n\phi\left( \frac{x}{tt_0} \right)=0,
\end{equation*}
for each fixed $x\in\R^n$ and by (\ref{comp})
\begin{equation*}
\frac{\left(\frac{\|x\|}{tt_0}\right)^n\phi\left( \frac{x}{tt_0} \right)}{\|x\|^n\phi_{t_0}(x)}=\frac{\phi_t\left(\frac{x}{t_{0}}\right)}{\phi\left(\frac{x}{t_0}\right)}\leq C,\:\:\:\:\|x\|>t_0,\: 0<t<1,
\end{equation*}
for some positive constant $C$. Since $\phi_{t_0}\in L^1(\R^n,d\mu)$, it follows from (\ref{rel3}), by the dominated convergence theorem that
\begin{equation*}
\lim_{t\to 0}I(tt_0)=0.
\end{equation*}
Consequently,
\begin{equation*}
\lim_{t\to 0}I(t)=\lim_{t\to 0} I(tt_0^{-1}t_0)=\lim_{t\to 0}\int_{\{x\in\R^n\mid \|x\|>t_0 \}}\phi_t(x)\:d\mu (x)=0.
\end{equation*}
This proves (\ref{reducemu}). From the definition of $\tilde{\mu}$, we have
\begin{equation*}
\tilde{\mu}(B(0,r))=\mu(B(0,r)),
\end{equation*}
for all $r\in(0,t_0)$. This proves (\ref{reduced}).
\end{proof}
Next, we are going to prove two simple lemmas which will be used in the proof of the main theorem.
\begin{lemma}\label{boundedM}
Let $\phi$ be a strictly positive, radial and radially decreasing function on $\R^n$. Let $\mu$ be a finite positive measure on $\R^n$ and \begin{equation*}
v(t)=\mu\ast\phi_t(0),\:\:\:\: t\in (0,\infty).
\end{equation*}If
\begin{equation*}
\lim_{t\to 0}v(t)=L<\infty,
\end{equation*}
then
\begin{enumerate}
\item
$v$ is a bounded function on $(0,\infty)$.
\item The function
\begin{equation*}
M(r)=\frac{\mu (B(0,r))}{m(B(0,r))},\:\:\:\: r\in (0,\infty),
\end{equation*}
is a bounded function on $(0,\infty)$.
\end{enumerate}
\end{lemma}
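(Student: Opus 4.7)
The strategy rests on two direct pointwise envelopes for $\phi_t$, both immediate consequences of $\phi$ being radially decreasing and taking values in $(0,\infty)$. The upper bound is $\phi_t(x) \le \phi_t(0) = t^{-n}\phi(0)$, uniformly in $x$; the lower bound is $\phi_t(x) \ge t^{-n}\phi(1)$ for $x \in B(0,t)$, where I write $\phi(r)$ for the common value of $\phi$ on the sphere of radius $r$, as in the proof of Lemma \ref{reduce}. The hypothesis that $\phi$ maps into $(0,\infty)$, combined with radial monotonicity, forces $\phi(0) \in (0,\infty)$ and $\phi(1) \in (0,\infty)$, which is what makes both estimates quantitatively useful.

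For part (1), I split $(0,\infty)$ into $(0,\delta]$ and $[\delta,\infty)$ for some small $\delta > 0$. On $(0,\delta]$, the hypothesis $v(t) \to L$ as $t \to 0$ gives $v(t) \le L+1$ once $\delta$ is chosen small enough. On $[\delta,\infty)$, I integrate the upper envelope against $\mu$:
\begin{equation*}
v(t) \le \phi_t(0)\,\mu(\R^n) = t^{-n}\phi(0)\,\mu(\R^n) \le \delta^{-n}\phi(0)\,\mu(\R^n),
\end{equation*}
which is finite because $\mu$ is a finite measure. Combining the two ranges yields a uniform bound for $v$ on $(0,\infty)$.

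For part (2), I deduce boundedness of $M$ from that of $v$. For any $t > 0$ and any $x \in B(0,t)$, the inequality $\|x/t\| < 1$ together with radial monotonicity gives $\phi_t(x) \ge t^{-n}\phi(1)$. Integrating against $\mu$ over $B(0,t)$ and discarding the nonnegative contribution from the complement yields
\begin{equation*}
v(t) \ge t^{-n}\phi(1)\,\mu(B(0,t)),
\end{equation*}
so $M(t) = \mu(B(0,t))/m(B(0,t)) \le v(t)/(c_n \phi(1))$, where $c_n = m(B(0,1))$. The uniform bound on $v$ from (1) now transfers directly to $M$.

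There is no serious obstacle here; the argument pivots solely on the fact that, under the standing hypotheses, $\phi(0)$ is finite and $\phi(1)$ is strictly positive, so that the radially decreasing profile sandwiches $\phi_t$ between two scale-$t^{-n}$ multiples of positive constants on any ball of radius $t$. Neither integrability of $\phi$ nor the comparison condition (\ref{comp}) plays any role at this stage.
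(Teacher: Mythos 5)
Your proof is correct and follows essentially the same route as the paper: the same splitting of $(0,\infty)$ at a small $\delta$ together with the uniform bound $\phi_t(x)\leq t^{-n}\phi(0)$ for part (1), and the same lower envelope $\phi_t(x)\geq t^{-n}\phi(1)$ on $B(0,t)$ yielding $M(t)\leq \left(m(B(0,1))\phi(1)\right)^{-1}v(t)$ for part (2). Nothing further is needed.
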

\begin{proof}
The proof of $(1)$ is simple. Since $v$ has a finite limit $L$ at zero there exists a $\delta >0$ such that
\begin{equation*}
0\leq v(t)\leq L+1,
\end{equation*}
for all $0<t\leq\delta$. On the other hand, for all $t\geq\delta$
\begin{equation*}
v(t)=\int_{\R^n}\phi_t(x)\:d\mu(x)
= t^{-n}\int_{\R^n}\phi\left(\frac{x}{t}\right)d\mu(x)
\leq \delta^{-n}\phi(0)\mu(\R^n).
\end{equation*}
Hence, $v$ is bounded on $(0,\infty)$.

As $\mu$ is a finite positive measure it is clear that $M$ is bounded for all large $r$. Hence, to prove $(2)$ it suffices to show that $M$ is bounded for $r$ near zero. We observe that for any $r\in (0,\infty)$,
\begin{equation*}
\mu\ast\phi_r(0)\geq r^{-n}\int_{B(0,r)}\phi\left(\frac{x}{r}\right)\:d\mu(x)\geq r^{-n}\int_{B(0,r)}\phi(1)\:d\mu(x)=m(B(0,1))\phi(1)M(r).
\end{equation*}
Setting $C_{n,\phi}=\left(m(B(0,1))\phi(1)\right)^{-1}$, the equation above implies that
\begin{equation}\label{elminq5}
M(r)\leq C_{n,\phi}v(r),\:\:\:\: r\in (0,\infty).
\end{equation}
Since $v(r)$ is bounded for all $r$ near zero it follows from above that so is $M(r)$.
\end{proof}
\begin{remark}\label{elminq6}
We observe that the inequality (\ref{elminq5}) remains valid even if $\mu$ is an infinite positive measure. This observation will be used in the proof of Theorem \ref{mt2}.
\end{remark}
To prove our next lemma we will have to use the convolution on the multiplicative group $(0,\infty)$ with Haar measure $ds/s$. To differentiate with the convolution on $\R^n$ we will write
\begin{equation*}
f\ast_{(0,\infty)}g(t)=\int_0^{\infty}f(s)g\left(\frac{t}{s} \right)\frac{ds}{s},
\end{equation*}
where $f$ and $g$ are integrable on $(0,\infty )$ with respect to Haar measure $ds/s$.
\begin{lemma}\label{limitlemma}
Suppose $k\in L^{\infty}(0,\infty)$ is such that
\begin{equation*}
\lim_{t\to 0}k(t)=L,
\end{equation*}
for some $L\in\C$. Then for all $f\in L^1((0,\infty), dt/t)$ with 
\begin{equation*}
\int_0^{\infty}f(s)\:\frac{ds}{s}=1, 
\end{equation*}
we have
\begin{equation*}
\lim_{t\to 0}f\ast_{(0,\infty)}k(t)=L.
\end{equation*}
\end{lemma}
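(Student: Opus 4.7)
The plan is to apply the dominated convergence theorem directly to the convolution integral
\begin{equation*}
f\ast_{(0,\infty)}k(t)=\int_0^{\infty}f(s)\,k(t/s)\,\frac{ds}{s}.
\end{equation*}
For each fixed $s\in(0,\infty)$, as $t\to 0$ we have $t/s\to 0$, so the hypothesis $\lim_{u\to 0}k(u)=L$ gives $k(t/s)\to L$ pointwise in $s$. Using the normalization $\int_0^{\infty}f(s)\,ds/s=1$, the quantity of interest can be rewritten as
\begin{equation*}
f\ast_{(0,\infty)}k(t)-L=\int_0^{\infty}f(s)\bigl(k(t/s)-L\bigr)\,\frac{ds}{s},
\end{equation*}
so it suffices to prove that this integral tends to zero as $t\to 0$.

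The key step is to exhibit an integrable dominant that is independent of $t$. Since $k\in L^{\infty}(0,\infty)$, for every $s,t\in(0,\infty)$ we have the pointwise bound
\begin{equation*}
\bigl|f(s)\bigl(k(t/s)-L\bigr)\bigr|\leq \bigl(\|k\|_{\infty}+|L|\bigr)\,|f(s)|,
\end{equation*}
and the right-hand side lies in $L^1((0,\infty),ds/s)$ by hypothesis on $f$. The dominated convergence theorem then gives
\begin{equation*}
\lim_{t\to 0}\int_0^{\infty}f(s)\bigl(k(t/s)-L\bigr)\,\frac{ds}{s}=\int_0^{\infty}f(s)\cdot 0\cdot\frac{ds}{s}=0,
\end{equation*}
which is precisely the desired conclusion.

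I do not foresee any substantive obstacle here; the lemma is a routine application of DCT on the multiplicative group $(0,\infty)$. The only mild point to note is that the convergence $k(t/s)\to L$ is invoked pointwise in $s$ rather than uniformly, but this is exactly the setting DCT is designed to handle once the uniform-in-$t$ dominant $(\|k\|_\infty+|L|)|f(s)|$ has been identified.
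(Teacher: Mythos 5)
Your proof is correct and is essentially the same argument as the paper's: both are a single application of the dominated convergence theorem with dominant a constant multiple of $|f|$, the only cosmetic difference being that you subtract the constant $L$ inside the integral (using the normalization of $f$) while the paper subtracts $k(t)$ and first shows $f\ast_{(0,\infty)}k(t)-k(t)\to 0$.
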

\begin{proof}
Let $f$ be as above. Note that for each $t\in(0,\infty)$,
\begin{eqnarray}\label{fconv}
\left|f\ast_{(0,\infty)}k(t)-k(t)\right|
&=& \left|\int_{0}^{\infty}f(s)k\left(\frac{t}{s}\right)\:\frac{ds}{s}-\int_{0}^{\infty}f(s)k(t)\:\frac{ds}{s}\right|\nonumber\\
&\leq & \int_{0}^{\infty}\left|f(s)\right|\left|k\left(\frac{t}{s}\right)-k(t)\right|\:\frac{ds}{s}.
\end{eqnarray}
Since $k$ has limit $L$ at zero, it follows that for each fixed $s\in (0,\infty)$,
\begin{equation*}
\lim_{t\to 0}\left|k\left(\frac{t}{s}\right)-k(t)\right|=0.
\end{equation*}
The integrand on the right-hand side of (\ref{fconv}) is bounded by $2\|k\|_{L^{\infty}(0,\infty)}|f|$, an integrable function on $(0,\infty)$.
Using dominated convergence theorem we conclude from (\ref{fconv}) that
\begin{equation*}
\lim_{t\to 0}\left|f\ast_{(0,\infty)}k(t)-k(t)\right|=0,
\end{equation*}
which in turn, implies that
\begin{equation*}
\lim_{t\to 0}f\ast_{(0,\infty)}k(t)=\lim_{t\to 0}k(t)=L.
\end{equation*}
\end{proof}
To proceed further, we will need the following versions of Wiener's Tauberian theorem \cite[Theorem 9.7]{Ruf} for the multiplicateive group $(0,\infty)$.
\begin{theorem}\label{wtt}
Suppose $\psi\in L^{\infty}(0,\infty)$ and $K\in L^1((0,\infty),dt/t)$ with the Fourier transform $\hat{K}$ everywhere nonvanishing on $\R$.
\begin{enumerate}
\item If, $\lim_{t\to\infty}K\ast_{(0,\infty)}\psi (t)=a\hat{K}(0)$, then for all $f\in L^1((0,\infty), dt/t)$, $\lim_{t\to\infty}f\ast_{(0,\infty)} \psi (t)=a\hat{f}(0)$.
\item If, $\lim_{t\to 0}K\ast_{(0,\infty)}\psi (t)=a\hat{K}(0)$, then for all $f\in L^1((0,\infty), dt/t)$, $\lim_{t\to 0}f\ast_{(0,\infty)} \psi (t)=a\hat{f}(0)$.
\end{enumerate}
\end{theorem}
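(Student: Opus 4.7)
The plan is to reduce the statement to the classical Wiener Tauberian theorem on the additive group $\R$. The key observation is that the multiplicative group $(0,\infty)$ with Haar measure $dt/t$ is isomorphic, as a locally compact abelian group, to $(\R,du)$ via the exponential map $u\mapsto e^u$. Moreover, parts (1) and (2) are related by the substitution $t\mapsto 1/t$, which is a self-homeomorphism of $(0,\infty)$ exchanging $0$ and $\infty$ and preserving the Haar measure, so it suffices to handle (1).

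First I would transport the data via the exponential map. Setting $\tilde{\psi}(u)=\psi(e^u)$, $\tilde{K}(u)=K(e^u)$, and $\tilde{f}(u)=f(e^u)$, the change of variable $s=e^u$ gives $\tilde{K},\tilde{f}\in L^1(\R,du)$ and
\begin{equation*}
K\ast_{(0,\infty)}\psi(t)=(\tilde{K}\ast\tilde{\psi})(\log t),\qquad f\ast_{(0,\infty)}\psi(t)=(\tilde{f}\ast\tilde{\psi})(\log t).
\end{equation*}
The multiplicative Fourier transform $\hat{K}(\xi)=\int_0^\infty K(t)\,t^{-i\xi}\,dt/t$ coincides with the usual Fourier transform $\widehat{\tilde{K}}(\xi)$ on $\R$, and is therefore everywhere nonvanishing. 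Since $\log t\to\infty$ as $t\to\infty$, the hypothesis becomes $\lim_{x\to\infty}(\tilde{K}\ast\tilde{\psi})(x)=a\widehat{\tilde{K}}(0)$.

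The second step is the familiar closed-subspace argument on $\R$. Define
\begin{equation*}
A=\left\{g\in L^1(\R)\,:\,\lim_{x\to\infty}(g\ast\tilde{\psi})(x)=a\widehat{g}(0)\right\}.
\end{equation*}
Young's inequality $\|g\ast\tilde{\psi}\|_\infty\le\|g\|_1\|\tilde{\psi}\|_\infty$ together with the continuity of $g\mapsto\widehat{g}(0)$ on $L^1$ shows that $A$ is closed in $L^1(\R)$. Writing $\tau_y g(u)=g(u-y)$, a direct change of variable gives $(\tau_y g)\ast\tilde{\psi}(x)=(g\ast\tilde{\psi})(x-y)$ and $\widehat{\tau_y g}(0)=\widehat{g}(0)$, so $A$ is translation invariant. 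Linearity of convolution and of the Fourier transform makes $A$ a linear subspace, and by hypothesis $\tilde{K}\in A$.

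The main obstacle is the deep input of Wiener's density theorem: a closed translation-invariant subspace of $L^1(\R)$ containing a function with everywhere nonvanishing Fourier transform must coincide with $L^1(\R)$. This rests on the Gelfand theory of the commutative Banach algebra $L^1(\R)$, whose maximal ideal space is identified with $\R\cup\{\infty\}$; I would simply invoke it. Applied to $\tilde{K}\in A$, it yields $A=L^1(\R)$. Pulling back via $u=\log t$ gives the conclusion of (1) for every $f\in L^1((0,\infty),dt/t)$, and part (2) follows by applying (1) to $\psi(1/t)$, $K(1/t)$, $f(1/t)$.
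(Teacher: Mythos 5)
Your proof is correct and is exactly the derivation the paper intends: the paper gives no proof, simply citing the Euclidean Wiener Tauberian theorem (Rudin, \emph{Functional Analysis}, Theorem 9.7), and your exponential change of variables plus the $t\mapsto 1/t$ reduction of (2) to (1) is the standard way to transfer that result to $((0,\infty),dt/t)$, with Wiener's density theorem invoked at the same point where the paper's citation does the work. (Only a cosmetic quibble: the maximal ideal space of the non-unital algebra $L^1(\R)$ is $\R$ itself, the point at infinity arising only after adjoining a unit; this does not affect the argument.)
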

We are now in a position to present a generalization of Theorem \ref{rud}, which is the main result of this paper. 
\begin{theorem}\label{mt1}
Suppose $\phi:\R^n\to (0,\infty)$ satisfies the following conditions,
\begin{enumerate}
\item $\phi$ is radial, radially decreasing measurable function with $\|\phi\|_1=1$.
\item $\phi$ satisfies the condition (\ref{comp}).
\item For all $y\in\R$,
\begin{equation}\label{tauberian}
\int_{\R^n}\phi(x)\|x\|^{iy}\:dx\neq 0.
\end{equation}
\end{enumerate}
Suppose $\mu$ is a positive Radon measure on $\R^n$ such that $\mu\ast\phi_{t_0}(0)$ is finite for some $t_0\in(0,\infty)$. If for some $x_0\in \R^n$ and $L\in [0,\infty)$
\begin{equation*}
\lim_{t\to 0} \mu\ast\phi_t(x_0)=L,
\end{equation*}
then $D_{sym}\mu(x_0)=L$.
\end{theorem}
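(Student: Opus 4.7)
Without loss of generality take $x_{0}=0$, and by Lemma~\ref{reduce} we may further assume $\mu$ is a finite positive measure (replacing $\mu$ by its restriction $\tilde\mu$ to $\overline{B(0,t_{0})}$ alters neither the limit of $v(t):=\mu\ast\phi_{t}(0)$ nor the symmetric derivative at $0$). Lemma~\ref{boundedM} then ensures that $v$ and the averaged density $M(r):=\mu(B(0,r))/m(B(0,r))$ are bounded on $(0,\infty)$, while $v(t)\to L$ as $t\to 0$.

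The heart of the proof is to recast $v$ as a multiplicative convolution of $M$ with a kernel $K$ on $(0,\infty)$. Writing $v(t)=\int_{0}^{\infty}\phi_{t}(r)\,dF(r)$ with the radial distribution $F(r)=\mu(\overline{B(0,r)})=c_{n}r^{n}M(r)$ (where $c_{n}=m(B(0,1))=\omega_{n-1}/n$), integrating by parts against the Lebesgue--Stieltjes measure $d\phi$ (boundary terms vanish by (\ref{rel2})), and substituting $u=r/t$ and then $s=1/u$, one arrives at
\begin{equation*}
v(t)=K\ast_{(0,\infty)}M(t),
\end{equation*}
for some nonnegative $K\in L^{1}((0,\infty),ds/s)$ with $\|K\|_{L^{1}(ds/s)}=1$ and
\begin{equation*}
\widehat{K}(y)=\frac{n-iy}{n}\int_{\R^{n}}\phi(x)\|x\|^{-iy}\,dx,\qquad y\in\R.
\end{equation*}
By (\ref{tauberian}) applied with $-y$ in place of $y$, and because $n-iy\ne 0$ for $y\in\R$, $\widehat K$ is nowhere vanishing, and $\widehat K(0)=1$. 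Our hypothesis now reads $v(t)=K\ast_{(0,\infty)}M(t)\to L=L\,\widehat K(0)$, so part~(2) of Theorem~\ref{wtt} yields
\begin{equation*}
\lim_{t\to 0}f\ast_{(0,\infty)}M(t)=L\,\widehat f(0)\qquad\text{for every }f\in L^{1}((0,\infty),dt/t).
\end{equation*}

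To deduce pointwise convergence of $M$, specialize to $f=\mathbf{1}_{(c,1)}$ with $0<c<1$; a change of variables turns the above into
\begin{equation*}
\int_{t}^{t/c}M(u)\,\frac{du}{u}\longrightarrow L\log(1/c)\quad\text{as }t\to 0^{+}.
\end{equation*}
Positivity of $\mu$ makes $r\mapsto\mu(\overline{B(0,r)})$ nondecreasing, which yields $c^{n}M(t)\le M(u)\le c^{-n}M(t/c)$ for all $u\in[t,t/c]$. Inserting these two-sided bounds into the integral, dividing by $\log(1/c)$, and letting $t\to 0^{+}$ produces
\begin{equation*}
c^{n}\,\limsup_{\tau\to 0}M(\tau)\le L\le c^{-n}\,\liminf_{\tau\to 0}M(\tau);
\end{equation*}
sending $c\to 1^{-}$ gives $\lim_{r\to 0}M(r)=L$, i.e., $D_{sym}\mu(0)=L$.

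The main technical difficulty lies in the convolution representation step, since $\phi$ is only radially decreasing and therefore only locally of bounded variation; the integration by parts has to be executed against the Stieltjes measure $d\phi$, with the vanishing of boundary terms at $0$ and $\infty$ secured by (\ref{rel2}). Beyond this, the proof is driven entirely by the Tauberian hypothesis (\ref{tauberian}), which is tailored precisely so that $\widehat K$ is nonvanishing, and the closing squeeze exploits positivity of $\mu$ in an essential way---mirroring Loomis's classical example showing that the converse must fail without positivity.
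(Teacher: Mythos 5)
Your proposal follows the same architecture as the paper's proof (reduction to a finite measure via Lemma \ref{reduce}, boundedness of $v$ and $M$ via Lemma \ref{boundedM}, a multiplicative convolution identity, Wiener's theorem, and a monotonicity squeeze --- your squeeze with $\chi_{(c,1)}$ is correct and slightly more economical than the paper's version with two bump functions). However, there is a genuine gap in the convolution-representation step. Integrating $v(t)=\int_0^\infty\phi_t(r)\,dF(r)$ by parts against the Lebesgue--Stieltjes measure $d\phi$ and substituting as you indicate yields
\begin{equation*}
v(t)=\int_{(0,\infty)}M\!\left(\frac{t}{s}\right)d\nu(s),
\end{equation*}
where $\nu$ is the image under $u\mapsto 1/u$ of the finite positive measure $m(B(0,1))\,u^{n}\,(-d\phi)(u)$. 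Nothing in hypotheses (1)--(3) forces the radial profile of $\phi$ to be absolutely continuous: it is merely nonincreasing, so $d\phi$ may have atoms or a singular part, and then $\nu$ has no density with respect to $ds/s$. Your assertion that $K\in L^{1}((0,\infty),ds/s)$ is therefore false in general, and Theorem \ref{wtt} as stated cannot be applied to $K$. (You flag the Stieltjes nature of $d\phi$ yourself, but the conclusion you draw from it is exactly the step that fails.)

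The gap is repairable, in two ways. One can prove a measure version of the Tauberian step: $g\ast_{(0,\infty)}\nu\in L^{1}((0,\infty),ds/s)$ for every $g\in L^{1}((0,\infty),ds/s)$, and when $\widehat{\nu}$ is nonvanishing the closed ideal generated by $L^{1}\ast\nu$ has empty hull, so the closed translation-invariant subspace $\{f: f\ast_{(0,\infty)}M(t)\to L\widehat f(0)\}$ is still all of $L^{1}$; but this requires an argument beyond the quoted Theorem \ref{wtt}. Alternatively --- and this is precisely what the paper does --- one first convolves $v$ with the auxiliary kernel $H(t)=nt^{-n}\chi_{[1,\infty)}(t)$: Lemma \ref{limitlemma} gives $H\ast_{(0,\infty)}v(t)\to L$, and a Fubini computation yields $H\ast_{(0,\infty)}v=M\ast_{(0,\infty)}g$ with the honest $L^{1}$ kernel $g(s)=n\,m(B(0,1))\,s^{-n}\phi(1/s)$, whose multiplicative Fourier transform is exactly $\int_{\R^n}\phi(x)\|x\|^{iy}\,dx$. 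This pre-smoothing replaces your $d\phi$ by $\phi$ itself and removes the regularity issue; formally $\widehat H\,\widehat K=\widehat g$, which also accounts for the factor $(n\pm iy)/n$ in your expression for $\widehat K$. The remainder of your argument is sound.
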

\begin{proof}
Without loss of generality, we can assume $x_0=0$. Indeed, we consider the translated measure $\mu_0=\tau_{-x_0}\mu$, where
\begin{equation*}
\tau_{x_0}\mu (E)=\mu(E-x_0),
\end{equation*}
for all Borel subsets $E\subset \R^n$. Using translation invariance of the Lebesgue measure it follows from the definition of symmetric derivative that $D_{sym}\mu_0(0)$ and $D_{sym}\mu (x_0)$ are equal. As translation commutes with convolution, it also follows that
\begin{equation*}
\mu_0\ast\phi_t(0)=(\tau_{-x_0}\mu\ast\phi_t)(0)=\tau_{-x_0}(\mu\ast\phi_t )(0)=\mu\ast\phi_t(x_0),
\end{equation*}
for any $t\in (0,\infty)$. Applying Lemma \ref{reduce}, we can restrict $\mu$ on $\overline{B(0,t_0)}$, if necessary, to assume that $\mu$ is a finite positive measure. As before, we define
\begin{equation*}
v(t)=\mu\ast\phi_t(0),\:\:0<t<\infty,\:\:\:\:\:\: M(r)=\frac{\mu (B(0,r)}{m(B(0,r))},\:0<r<\infty.
\end{equation*}
By Lemma \ref{boundedM}, we know that both $v$ and $M$ are bounded functions on $(0,\infty)$.
Following \cite{Ru}, we consider the following function on the multiplicative group $(0,\infty)$.
\begin{equation*}
H(t)=
\begin{cases}
0,&t\in(0,1)\\nt^{-n},&t\geq 1.
\end{cases}
\end{equation*}
Clearly, $H\in L^1((0,\infty),dt/t)$ with $\|H\|_{L^1((0,\infty),dt/t)}=1$. We observe that for $r\in (0,\infty)$,
\begin{eqnarray}
H\ast_{(0,\infty)}v(r)&=&\int_{0}^{\infty}H\left(\frac{r}{s}\right)v(s)\:\frac{ds}{s}
\nonumber\\
&=&n\int_{0}^{r}\left(\frac{s}{r}\right)^n\mu\ast\phi_s(0)\:\frac{ds}{s}
\nonumber\\
&=& nr^{-n}\int_{0}^{r}s^n\int_{\R^n}s^{-n}\phi\left(\frac{x}{s}\right)\:d\mu(x)\:\frac{ds}{s}
\nonumber\\
&=& nr^{-n}\int_{\R^n}\int_{0}^{r}\phi\left(\frac{x}{s}\right)\:\frac{ds}{s}\:d\mu(x)\:\:\:\:\text{(as $\phi\geq 0$)}.
\label{hvonvmain}
\end{eqnarray}
Since $M$ is a bounded function, it follows that
\begin{equation*}
\lim_{r\to 0}\mu(B(0,r))=\lim_{r\to 0}m(B(0,r))M(r)=0,
\end{equation*}
that is, $\mu$ has no point mass at $0$. We can now write, from (\ref{hvonvmain}), the convolution $H\ast_{(0,\infty)}v$ in a different way.
\begin{eqnarray}\label{hconv1}
H\ast_{(0,\infty)}v(r)&=& nr^{-n}\int_{\R^n\backslash\{0\}}\int_{0}^{r}\phi\left(\frac{x}{s}\right)\:\frac{ds}{s}\:d\mu(x)\nonumber\\
&=&nr^{-n}\int_{\R^n\backslash\{0\}}\int_{0}^{\frac{r}{\|x\|}}\phi\left(\frac{x}{t\|x\|}\right)\:\frac{dt}{t}\:d\mu(x)\nonumber\\
&&\:\:\:\:\:\:\:\:\text{(change of variables, $s=t\|x\|$)}\nonumber\\
&=&nr^{-n}\int_{\R^n\backslash\{0\}}\int_{0}^{\frac{r}{\|x\|}}\phi\left(\frac{1}{t}\right)\:\frac{dt}{t}\:d\mu(x)\nonumber\\
&=&n\int_{0}^{\infty}\int_{B(0,\frac{r}{t})\backslash\{0\}}\:d\mu(x)\phi\left(\frac{1}{t}\right)t^{-n}\left(\frac{t}{r}\right)^n\:\frac{dt}{t}\nonumber\\
&&\:\:\:\:\:\:\text{(by Fubini's theorem)}\nonumber\\
&=&n\:m(B(0,1))\int_{0}^{\infty}\frac{\mu\left(B\left(0,\frac{r}{t}\right)\right)}{m\left(B\left(0,\frac{r}{t}\right)\right)}t^{-n}\phi\left(\frac{1}{t}\right)\:\frac{dt}{t}\nonumber\\
&=&n\:m(B(0,1))\int_{0}^{\infty}M\left(\frac{r}{t} \right)t^{-n}\phi\left(\frac{1}{t}\right)\:\frac{dt}{t}.
\end{eqnarray}
By defining
\begin{equation*}
g(s)=n\:m(B(0,1))s^{-n}\phi\left(\frac{1}{s}\right),\:0<s<\infty,
\end{equation*}
equation (\ref{hconv1}) can be rewritten as
\begin{equation*}
H\ast_{(0,\infty)}v(r)=M\ast_{(0,\infty)}g(r),\:\:\:\: r\in (0,\infty).
\end{equation*}
By hypothesis, $v(r)$ converges to $L$ as $r$ goes to zero and hence by Lemma \ref{limitlemma}, so does $H\ast_{(0,\infty)}v(r)$ . It now follows from the equation above that
\begin{equation}\label{taub}
\lim_{r\to 0} M\ast_{(0,\infty)}g(r)=L.
\end{equation}
We want to use Theorem \ref{wtt} to deduce from (\ref{taub}) that for all $f\in L^1\big((0,\infty),\frac{ds}{s}\big)$ with integral one,
\begin{equation}\label{weakconv}
\lim_{r\to 0} M\ast_{(0,\infty)}f(r)=L.
\end{equation}
In order to do so we need to show that $g$ is of integral one with everywhere nonvanishing Fourier transform on the multiplicative group $(0,\infty)$. This can be deduced from the assumption (\ref{tauberian}). Precisely, for all $y\in\R$ we have
\begin{eqnarray*}
\what g(y)
&=&\int_{0}^{\infty}g(s)s^{-iy}\:\frac{ds}{s}\\
&=&n\:m(B(0,1))\int_{0}^{\infty}s^{-n}\phi\left(\frac{1}{s}\right)s^{-iy}\:\frac{ds}{s}\\
&=& \omega_{n-1}\int_{0}^{\infty}\phi(t)t^{iy}t^n\:\frac{dt}{t}\\
&&\:\:\:\:\text{( change of variables, $t=1/s$)}\\
&=&\int_{\R^n}\phi(x)\|x\|^{iy}\:dx.
\end{eqnarray*}
We observe that by considering $y=0$, it also follows that $g\in L^1((0,\infty),dt/t)$ with $\hat{g}(0)=1$. Validity of the limit (\ref{weakconv}) now follows from Wiener's Tauberian theorem. In the final part of the proof we shall use (\ref{weakconv}) to deduce that
\begin{equation}\label{ultimate}
D_{sym}\mu(0)=\lim_{r\to 0}M(r)=L.
\end{equation}
We fix an arbitrary $\gamma\in (1,\infty)$. We choose positive functions $f_i\in C_c(0,\infty)$ with  $\|f_i\|_{L^1((0,\infty),dt/t)} =1$ for $i=1,2$, and
\begin{equation*}
\text{supp}~f_1\subset [1,\gamma],\:\:\:\:\:\text{supp}~f_2\subset \left[\frac{1}{\gamma},1\right].
\end{equation*}
By monotonicity of $\mu$ we have for $t\in [1,\gamma]$ and $r\in (0,\infty)$,
\begin{equation*}
m\left(B\left(0,\frac{r}{t}\right)\right)M\left(\frac{r}{t}\right)=\mu\left(B\left(0,\frac{r}{t}\right)\right)\leq\mu(B(0,r)),
\end{equation*}
and hence
\begin{equation}\label{Mapprox1}
M\left(\frac{r}{t}\right)\leq \frac{\mu(B(0,r))}{m\left(B\left(0,\frac{r}{t}\right)\right)}= t^nM(r)\leq\gamma^nM(r).
\end{equation}
By a similar argument it follows that for $t\in [1/\gamma,1]$ and $r\in (0,\infty)$,
\begin{equation}\label{Mapprox2}
M\left(\frac{r}{t}\right)\geq\gamma^{-n}M(r).
\end{equation}
Now, for $r\in (0,\infty)$
\begin{equation}\label{Mlimit1}
 M\ast_{(0,\infty)}f_1(r)=\int_{1}^{\gamma}f_1(t)M\left(\frac{r}{t}\right)\:\frac{dt}{t}\leq\int_{1}^{\gamma}\gamma^nM(r)f_1(t)\:\frac{dt}{t}=\gamma^nM(r),
\end{equation}
where the inequality follows from (\ref{Mapprox1}). Similarly, using (\ref{Mapprox2}) we get
\begin{equation}\label{Mlimit2}
M\ast_{(0,\infty)}f_2(r)\geq\gamma^{-n}M(r)\:\:\:\: r\in (0,\infty).
\end{equation}
Combining (\ref{Mlimit1}) and (\ref{Mlimit2}) we get
\begin{equation*}
\gamma^{-n}M\ast_{(0,\infty)}f_1(r)\leq M(r)\leq\gamma^{n}M\ast_{(0,\infty)}f_2(r),\:\:\:\:\:r\in (0,\infty).
\end{equation*}
Allowing $r$ tending to zero in the inequality above and using (\ref{weakconv}) we get
\begin{equation*}
\gamma^{-n}L\leq\liminf_{r\to 0}M(r)\leq\limsup_{r\to 0}M(r)\leq\gamma^nL.
\end{equation*}
This implies (\ref{ultimate}) as $\gamma>1$ is arbitrary. This completes the proof.
\end{proof}
We now show by an example that Theorem \ref{mt1} fails in the absence of condition (\ref{tauberian}).
\begin{example}\label{nec1}
Suppose $\phi:\R^n\to(0,\infty)$ is such that it satisfies the first two conditions of Theorem \ref{mt1} but does not satisfy the third condition. That is, there exists $y_0\in\R$, such that
\begin{equation}\label{fails}
\int_{\R^n}\phi (x)\cos (y_0\log \|x\|)\:dx=\int_{\R^n}\phi (x)\sin (y_0\log \|x\|)\:dx=0.
\end{equation}
As $\|\phi\|_1=1$ and $\phi$ is strictly positive, we have $y_0\neq 0$. We consider the function
\begin{eqnarray*}
f(x)&=& 2+\cos (y_0\log \|x\|), \:\:\:\: x\in\R^n\setminus \{0\},\\
&=& 1,\:\:\:\:\:\:\:\:\:\: x=0,
\end{eqnarray*}
and define a positive measure, $d\mu (x)=f(x)dx$. We will show that
\begin{equation}\label{limit}
\lim_{t\to 0} \mu\ast \phi_t(0)=2,
\end{equation}
but the symmetric derivative of $\mu$ does not exist at zero. Now, for all $t\in (0,\infty)$
\begin{eqnarray*}
\mu\ast\phi_t(0)&=& t^{-n}\int_{\R^n}\phi\left( \frac{x}{t}\right)f(x)\:dx\\
&=& \int_{\R^n}\phi (x)f(tx)\:dx\\
&=& 2\int_{\R^n}\phi (x)dx+\int_{\R^n}\phi (x)\cos (y_0\log t+y_0\log \|x\|)\:dx\\
&=& 2+\cos (y_0\log t)\int_{\R^n}\phi (x)\cos (y_0\log \|x\| )\:dx\\
&&\:\:\:\:\:-\sin (y_0\log t)\int_{\R^n}\phi (x)\sin (y_0\log \|x\| )\:dx\\
&=& 2,
\end{eqnarray*}
where the last equality follows from (\ref{fails}). This implies the limit (\ref{limit}). On the other hand, for $r\in (0,\infty)$
\begin{eqnarray*}
\frac{1}{m(B(0,r))}\int_{B(0,r)}f(x)\:dx&=&\frac{1}{m(B(0,r))}\int_{B(0,r)}(2+\cos(y_0\log\|x\|))\:dx\\
&=& 2+\frac{1}{m(B(0,r))}\text{Re}\left[\int_{B(0,r)}\|x\|^{iy_0}\:dx\right]\\
&=& 2+\frac{\omega_{n-1}}{m(B(0,1))}\text{Re}\left( \frac{r^{iy_0}}{n+iy_0} \right).
\end{eqnarray*}
Using the fact that $y_0$ is nonzero, it is easy to construct two different sequences $\{r_k\}$ converging to zero such that $\text{Re}( r_k^{iy_0}/(n+iy_0))$ converges to different limits. Hence, $D_{sym}\mu(0)$ does not exist.

It now remains to construct a function $\phi$ on $\R^n$, as above. To do this, we first consider the following functions defined on $(0,\infty)$,
\begin{equation*}
g(r)=\chi_{[e^{-1},e]}(r),\:\:\:\:\:\:\:
f(r)= \frac{r^n}{(1+r)^{2n}}.
\end{equation*}
Clearly, $f$ and $g$ both are in $L^1((0,\infty),dr/r)$ and the function $r\mapsto r^{-n}f(r)$ is decreasing in $(0,\infty)$. Moreover, for all $y\in\R\setminus \{0\}$
\begin{equation*}
\int_0^{\infty}g(r)r^{iy}\:\frac{dr}{r}=\frac{2\sin y}{y},
\end{equation*}
which vanishes for $y=\pi$. We now define
\begin{equation*}
\psi(s)=f\ast_{(0,\infty)} g(s)=\int_0^{\infty}f\left(\frac{s}{r}\right)g(r)\:\frac{dr}{r},\:\:\:\: s\in (0,\infty).
\end{equation*}
Then $\psi\in L^1((0,\infty), ds/s)$ with $\widehat{\psi}(\pi)=0$ and is strictly positive. Hence, 
\begin{equation*}
c_{\psi}:=\int_{0}^{\infty}\psi(s)\:\frac{ds}{s}>0.
\end{equation*} 
As the function $r\mapsto r^{-n}f(r)$ is decreasing in $(0,\infty)$ and $g$ is nonnegative, it follows that
\begin{equation}\label{dec}
\frac{\psi(s)}{s^n}=\int_0^{\infty}\frac{f(s/r)}{(s/r)^n}g(r)r^{-(n+1)}\:dr,
\end{equation}
is also decreasing. Moreover, for each $r\in (0,\infty)$
\begin{equation*}
\lim_{s\to 0}\frac{f(s/r)}{s^n}=\lim_{s\to 0}\frac{r^n}{(r+s)^{2n}}=r^{-n},
\end{equation*}
and
\begin{equation*}
\frac{f(s/r)}{s^n}g(r)\leq r^{-n}g(r),\:\:\:\: s\in (0,\infty).
\end{equation*}
From the expression (\ref{dec}) it follows by applying dominated convergence theorem that
\begin{equation*}
\lim_{s\to 0}\frac{\psi (s)}{s^n}=\int_0^{\infty}g(r)r^{-(n+1)}dr>0.
\end{equation*}
Finally, we define $\phi:\R^n\to (0,\infty)$ by
\[
\phi(x)=
\left\{
\begin{array}{ccl}
\displaystyle \frac{1}{c_{\psi}\omega_{n-1}}\frac{\psi (\|x\|)}{\|x\|^n}, && x\neq 0,\\
\displaystyle \frac{1}{c_{\psi}\omega_{n-1}}\lim_{s\to 0}\frac{\psi (s)}{s^n}, && x= 0.
\end{array}
\right.
\]
By construction, $\phi$ is strictly positive, radial and radially decreasing on $\R^n$. For all $y\in\R$,
\begin{equation*}
\int_{\R^n}\phi (x)\|x\|^{iy}\:dx=\omega_{n-1}\frac{1}{c_{\psi}\omega_{n-1}}\int_0^{\infty}\frac{\psi (s)}{s^n}s^{n-1+iy}\:ds=\frac{1}{c_{\psi}}\int_0^{\infty}\psi (s) s^{iy}\:\frac{ds}{s}.
\end{equation*}
This shows that $\phi\in L^1(\R^n)$,
\begin{equation*}
\int_{\R^n}\phi (x)\:dx=1,
\end{equation*}
and
\begin{equation*}
\int_{\R^n}\phi (x)\|x\|^{i\pi}dx=0.
\end{equation*}
It remains to show that $\phi$ satisfies the comparison condition (\ref{comp}). The function $\psi$ has the following explicit expression
\begin{equation}\label{psiexp}
\psi (s)=\int_{e^{-1}}^e\frac{r^ns^n}{(r+s)^{2n}}\:\frac{dr}{r}.
\end{equation}
Differentiation under the integral sign yields
\begin{equation*}
\psi'(s)=\int_{e^{-1}}^e\frac{n(r+s)^{2n-1}s^{n-1}(r-s)}{(r+s)^{4n}}r^{n-1}\:dr,
\end{equation*}
which is negative if $s\in (e,\infty)$. Hence, $\psi$ is decreasing in $(e,\infty)$. Now, for $\|x\|\in (e,\infty)$ and $t\in (0,1)$ we have from the definition of $\phi$
\begin{equation}\label{bdd1}
\frac{\phi_t(x)}{\phi (x)}= \frac{\psi(\|x\|/t)}{\psi (\|x\|)}\leq \frac{\psi(\|x\|)}{\psi (\|x\|)}=1.
\end{equation}
We will now deal with the case $\|x\|\in (1,e]$ and $t\in (0,1)$. Using the expression (\ref{psiexp}) of $\psi$ it follows that for $s\in (1,e]$, 
 \begin{equation}\label{bdd2}
\psi(s)\geq \int_{e^{-1}}^e\frac{e^{-(n-1)}}{(2e)^{2n}}\:dr=a_n>0.
\end{equation}
Also, for $t\in (0,1)$ and $s\in (1,e]$
\begin{equation}\label{bdd3}
\psi\left( \frac{s}{t} \right)=\int_{e^{-1}}^e\frac{t^ns^nr^{n-1}}{(tr+s)^{2n}}\:dr\leq \int_{e^{-1}}^es^{-n}r^{n-1}\:dr\leq \int_{e^{-1}}^er^{n-1}\:dr=b_n.
\end{equation}
It now follows from (\ref{bdd2}) and (\ref{bdd3}) that for $\|x\|\in (1,e]$, $t\in (0,1)$
\begin{equation}\label{bdd4}
\frac{\phi_t(x)}{\phi (x)}=\frac{\psi(\|x\|/t) }{\psi (\|x\|)}\leq a_n^{-1}b_n.
\end{equation}
Inequalities (\ref{bdd1}) and (\ref{bdd4}) together imply that $\phi$ satisfies the condition (\ref{comp}).
\end{example}
In the following theorem we show that Theorem \ref{mt1} remains valid for a restricted class of measures in the absence of condition $(2)$.
\begin{theorem}\label{mt2}
Suppose $\phi:\R^n\to (0,\infty)$ satisfies the following conditions,
\begin{enumerate}
	\item $\phi$ is radial, radially decreasing measurable function with $\|\phi\|_1=1$.
	\item $\phi$ satisfies the condition (\ref{tauberian}).
\end{enumerate}
Suppose $\mu$ is a positive Radon measure on $\R^n$ such that
\begin{equation}\label{growth}
\mu(B(0,r))=O(r^n),\:\: as\:\:r\to\infty,
\end{equation}
and that $\mu\ast\phi_{t_0}(0)$ is finite for some $t_0\in(0,\infty)$.
If for some $x_0\in \R^n$ and $L\in [0,\infty)$
\begin{equation*}
\lim_{t\to 0} \mu\ast\phi_t(x_0)=L,
\end{equation*}
then $D_{sym}\mu(x_0)=L$.	
\end{theorem}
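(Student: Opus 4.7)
As in the proof of Theorem \ref{mt1}, translation invariance reduces us to the case $x_0=0$; write $v(t)=\mu\ast\phi_t(0)$ and $M(r)=\mu(B(0,r))/m(B(0,r))$. The only step in the argument for Theorem \ref{mt1} that actually used condition (\ref{comp}) is Lemma \ref{reduce}, invoked just to replace $\mu$ by a finite positive measure and thereby obtain $v,M\in L^\infty(0,\infty)$. The plan is therefore to derive these two $L^\infty$ bounds directly from the growth hypothesis (\ref{growth}) and then to run the Tauberian machinery from the proof of Theorem \ref{mt1} verbatim.

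For the boundedness of $M$, combine three ingredients. Remark \ref{elminq6} supplies $M(r)\le C_{n,\phi}v(r)$ for every $r>0$ (it does not require $\mu(\R^n)<\infty$), and together with the hypothesis $v(r)\to L$ this bounds $M$ on some interval $(0,\delta]$; local finiteness of $\mu$ bounds $M$ on any compact subinterval $[\delta,R_0]$; and (\ref{growth}) bounds $M$ on $[R_0,\infty)$. Hence $M\in L^\infty(0,\infty)$, and in particular $\mu(\{0\})=0$. For the boundedness of $v$ use that $\phi$ is strictly positive, radial and radially decreasing, so every super-level set $\{\phi_t>s\}$ is a ball centered at $0$, and hence
\[
v(t)=\int_0^\infty \mu(\{\phi_t>s\})\,ds\leq \|M\|_\infty\int_0^\infty m(\{\phi_t>s\})\,ds=\|M\|_\infty
\]
by the layer-cake identity and $\|\phi_t\|_1=1$. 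Thus $v\in L^\infty(0,\infty)$ as well.

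With $v,M\in L^\infty(0,\infty)$ in hand, the rest is a transcription of the argument for Theorem \ref{mt1}. Setting
\[
H(t)=nt^{-n}\chi_{[1,\infty)}(t), \qquad g(s)=n\,m(B(0,1))\,s^{-n}\phi(1/s),
\]
the Fubini calculation leading to (\ref{hconv1}) yields $H\ast_{(0,\infty)}v(r)=M\ast_{(0,\infty)}g(r)$; Lemma \ref{limitlemma} propagates the hypothesis to $\lim_{r\to 0}H\ast_{(0,\infty)}v(r)=L$; and the change of variables from the proof of Theorem \ref{mt1} identifies $\what g(y)=\int_{\R^n}\phi(x)\|x\|^{iy}\,dx$, which is nowhere vanishing on $\R$ by (\ref{tauberian}) and satisfies $\what g(0)=1$. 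Part (2) of Theorem \ref{wtt} therefore gives $\lim_{r\to 0}M\ast_{(0,\infty)}f(r)=L\,\what f(0)$ for every $f\in L^1((0,\infty),dt/t)$. A final monotonicity sandwich with positive $f_1\in C_c([1,\gamma])$ and $f_2\in C_c([1/\gamma,1])$ of unit $L^1(dt/t)$-norm, using $M(r/t)\le\gamma^n M(r)$ on $[1,\gamma]$ and the reverse inequality on $[1/\gamma,1]$, gives $\gamma^{-n}L\le\liminf_{r\to 0}M(r)\le\limsup_{r\to 0}M(r)\le\gamma^nL$, and letting $\gamma\downarrow 1$ yields $D_{sym}\mu(0)=L$.

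The main obstacle, compared with the proof of Theorem \ref{mt1}, is thus isolated to the second paragraph: one must replace the truncation-via-comparison argument of Lemma \ref{reduce} by a direct demonstration that $v$ and $M$ lie in $L^\infty(0,\infty)$, and the key inputs for that are Remark \ref{elminq6}, local finiteness of $\mu$, the growth assumption (\ref{growth}), and the layer-cake identity.
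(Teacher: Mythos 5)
Your proof is correct and follows the same overall architecture as the paper's: reduce to $x_0=0$, observe that the only role of condition (\ref{comp}) in the proof of Theorem \ref{mt1} was to secure boundedness of $v$ and $M$ (via Lemma \ref{reduce}), establish those two bounds directly from (\ref{growth}), local finiteness, and Remark \ref{elminq6}, and then rerun the Tauberian argument verbatim. The one place where you genuinely diverge is the key estimate ``$M$ bounded $\Rightarrow$ $v$ bounded.'' The paper splits $\mu\ast\phi_t(0)$ over the ball $B(0,t)$ and the dyadic annuli $\{2^{k-1}t\le\|x\\|<2^kt\}$, bounds $\phi$ by its value on the inner sphere of each annulus, and sums the resulting series $\sum 2^{nk}\phi(2^{k-1})$, whose convergence is extracted from the integrability of $\phi$ (this is the computation culminating in (\ref{maximalr1})). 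You instead invoke the layer-cake identity $v(t)=\int_0^\infty\mu(\{\phi_t>s\})\,ds$ and use that each super-level set of the radial, radially decreasing $\phi_t$ is a ball centered at the origin, giving $v(t)\le\|M\|_\infty\|\phi_t\|_1=\|M\|_\infty$ in one line. Your route is cleaner and gives the sharper constant $1$ in place of the paper's $C_{n,\phi}$; it also handles finiteness of $v(t)$ for $t>t_0$ automatically, which the paper must note separately. The only point worth being slightly careful about is that a super-level set may be a closed rather than an open ball, but $\mu(\overline{B(0,r)})=\inf_{\rho>r}\mu(B(0,\rho))\le\|M\|_\infty\, m(\overline{B(0,r)})$, so the bound survives. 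Both approaches are equally elementary; yours is shorter, while the paper's dyadic decomposition makes the dependence of the constant on $\phi$ explicit.
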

\begin{proof}
Without loss of generality, we assume that $x_0=0$. We will use the same notation as in the proof of Theorem \ref{mt1}. From the proof of Theorem \ref{mt1} we observe that it suffices to prove the boundedness of the functions $v$ and $M$ and then the
the rest of the arguments remains same. 
Using Remark \ref{existence}, we note that $v$ is well defined in $(0,t_0]$ and the fact that $v$ is well defined in $(t_0,\infty)$ will be shown to be a consequence of the condition (\ref{growth}).
According to Remark \ref{elminq6}, the boundedness of $v$ implies boundedness of $M$. Therefore, it suffices to prove that under the hypothesis of the theorem the function $v$ is bounded.  Since $v$ has limit $L$ at zero, there exists $\delta\in(0,t_0)$, such that $v$ is bounded on $(0,\delta]$.
We observe that for $t>\delta$,
\begin{eqnarray}
\mu\ast\phi_t(0)&=&t^{-n}\int_{B(0,t)}\phi\left(\frac{x}{t}\right)\:d\mu(x)+t^{-n}\sum_{k=1}^{\infty}\int_{2^{k-1}t\leq \|x\|<2^kt}\phi\left(\frac{x}{t}\right)\:d\mu(x)\nonumber\\
&\leq&\phi(0)m(B(0,1))\frac{\mu(B(0,t))}{m(B(0,t))}+\sum_{k=1}^{\infty}\phi(2^{k-1})t^{-n}\mu(B(0,2^kt))\nonumber\\
&=&\phi(0)m(B(0,1))\frac{\mu(B(0,t))}{m(B(0,t))}+m(B(0,1))\sum_{k=1}^{\infty}2^{nk}\phi(2^{k-1})\frac{\mu(B(0,2^kt))}{m(B(0,2^kt))}\nonumber\\
&\leq&m(B(0,1))\left(\phi(0)+\sum_{k=1}^{\infty}2^{nk}\phi(2^{k-1})\right)\sup_{r>\delta}\frac{\mu(B(0,r))}{m(B(0,r))}\label{maximalr1}
\end{eqnarray}
As $\phi$ is radial and radially decreasing, we have
\begin{eqnarray*}
\int_{1}^{\infty}\phi(r)r^{n-1}dr
&=&\sum_{k=1}^{\infty}\int_{2^{k-1}}^{2^k}\phi(r)r^{n-1}dr\\
&\geq&\sum_{k=1}^{\infty}\phi(2^k)\int_{2^{k-1}}^{2^k}r^{n-1}\:dr\\
&=&\sum_{k=1}^{\infty}\phi(2^k)\frac{2^{nk}-2^{n(k-1)}}{n}\\
&=&\frac{2^{-n}-2^{-2n}}{n}\sum_{k=1}^{\infty}\phi(2^k)2^{n(k+1)}.
\end{eqnarray*}
Integrability of $\phi$ now implies that
\begin{equation*}
\sum_{k=1}^{\infty}2^{nk}\phi(2^{k-1})<\infty.
\end{equation*}
Hence, we conclude from inequality (\ref{maximalr1}) that for all $t>\delta$
\begin{equation}\label{maximalr2}
\mu\ast\phi_t(0)\leq C_{n,\phi}\sup_{t>\delta}\frac{\mu(B(0,t))}{m(B(0,t))}=C_{n,\phi}\sup_{t>\delta}M(t),
\end{equation}
where
\begin{equation*}
C_{n,\phi}=m(B(0,1))\left(\phi(0)+\sum_{k=1}^{\infty}2^{nk}\phi(2^{k-1})\right)<\infty.
\end{equation*}
As $\mu$ satisfies (\ref{growth}),  there exist positive constants $C$ and $r_0$ such that for all $r\geq r_0$,
\begin{equation*}
\frac{\mu(B(0,r))}{m(B(0,r))}\leq C.
\end{equation*}
On the other hand, for all $r\in(\delta,r_0)$,
\begin{equation*}
\frac{\mu(B(0,r))}{m(B(0,r))}\leq \frac{\mu(B(0,r))}{m(B(0,\delta))}\leq \frac{\mu(B(0,r_0))}{m(B(0,\delta))}.
\end{equation*}
Therefore,
\begin{equation*}
\sup_{r>\delta}M(r)=\sup_{r>\delta}\frac{\mu(B(0,r))}{m(B(0,r))}<\infty.
\end{equation*}
Existence of $v$ on $(t_0,\infty)$ and boundedness of $v$ on $(\delta,\infty)$ now follow from  boundedness of $M$ on $(\delta,\infty)$ and (\ref{maximalr2}).  This completes the proof.
\end{proof}
\begin{remark}
The condition (\ref{tauberian}) on $\phi$ is necessary in this case as well and can be seen from Example \ref{nec1} by noting that the positive measure $d\mu(x)=f(x)dx$ described there satisfies (\ref{growth}), in fact, 
$\mu(B(0,r))\leq 3\:m(B(0,r))$, for $r\in (0,\infty)$.
\end{remark}
As an application of Theorem \ref{mt1} we now suggest an alternative proof of a result \cite[Theorem 4]{W} (see also \cite[Theorem 4]{G} for the case $n=1$) regarding nonnegative solutions of the heat equation
\begin{equation*}
\Delta u(x,t)=u_t(x,t),\:\:\:\:(x,t)\in\R^{n+1}_+,
\end{equation*}
where $\Delta$ is the Laplacian of $\R^n$. It is known \cite[P.93-99]{W1} that if $u$ is a nonnegative solution of the heat equation in $\R^{n+1}_+$ then there exists a unique positive Radon measure $\mu$ on $\R^n$ such that
\begin{equation*}
u(x,t)=\mu*h_t(x)=\int_{\R^n}h_t(x-y)\:d\mu(y),\:\:\:\: (x,t)\in\R^{n+1}_+,
\end{equation*}
where $h_t$ is the heat kernel (see (\ref{heat})). The measure $\mu$ is called the boundary measure of $u$.
\begin{corollary}\label{heatpositive}
Let $u$ be a nonnegative solution of the heat equation with boundary measure $\mu$. If for some $x_0\in\R^n$ and $L\in [0,\infty)$
\begin{equation*}
\lim_{t\to 0} u(x_0,t)=L,
\end{equation*}
then
\begin{equation*}
D_{sym}\mu(x_0)=L.
\end{equation*}
\end{corollary}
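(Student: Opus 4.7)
The plan is to identify the heat kernel as a rescaled Gaussian and then invoke Theorem \ref{mt1}. Define
\begin{equation*}
w(x) = (4\pi)^{-n/2}\, e^{-\|x\|^2/4}, \qquad x\in\R^n,
\end{equation*}
so that, writing $w_s(x) = s^{-n} w(x/s)$ as usual, a direct computation gives $h_t(x) = w_{\sqrt{t}}(x)$. Consequently, $u(x_0,t) = \mu \ast h_t(x_0) = \mu \ast w_{\sqrt{t}}(x_0)$, and via the change of parameter $s = \sqrt{t}$ the hypothesis $\lim_{t\to 0} u(x_0,t) = L$ becomes
\begin{equation*}
\lim_{s\to 0} \mu \ast w_s(x_0) = L.
\end{equation*}

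Next I would verify the three hypotheses of Theorem \ref{mt1} for $\phi = w$. Condition (1) is immediate: $w$ is strictly positive, radial, radially decreasing, and has integral one. Condition (2), the comparison condition \eqref{comp}, is already established in Example \ref{example}(ii). For condition (3), passing to polar coordinates and substituting $s = r^2/4$, one computes
\begin{equation*}
\int_{\R^n} w(x)\,\|x\|^{iy}\, dx \;=\; \frac{\omega_{n-1}}{(4\pi)^{n/2}} \cdot \frac{1}{2}\cdot 4^{(n+iy)/2}\,\Gamma\!\left(\frac{n+iy}{2}\right),
\end{equation*}
which is nowhere vanishing on $y\in\R$ because the Gamma function has no zeros in the right half-plane $\{z\in\C : \mathrm{Re}\,z > 0\}$.

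Finally, the finiteness hypothesis of Theorem \ref{mt1} is automatic: since $u$ is by assumption a well-defined nonnegative solution of the heat equation in $\R_+^{n+1}$, the quantity $\mu \ast w_s(x_0) = u(x_0, s^2)$ is finite for every $s > 0$. Applying Theorem \ref{mt1} with $\phi = w$ therefore yields $D_{sym}\mu(x_0) = L$, completing the argument.

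I do not anticipate any real obstacle here; the corollary is essentially a relabeling of Theorem \ref{mt1} after recognising that the heat semigroup $\{h_t\}$ is a time-rescaled Gaussian approximate identity. The only genuine calculation is the evaluation of the Mellin-type integral showing the Tauberian condition, which reduces to the nonvanishing of $\Gamma$.
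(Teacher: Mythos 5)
Your proposal is correct and follows essentially the same route as the paper: identify $h_t=w_{\sqrt t}$ for the Gaussian $w$, verify the three hypotheses of Theorem \ref{mt1} (with the comparison condition coming from Example \ref{example}(ii) and the Tauberian condition from the Mellin integral evaluating to a nonzero multiple of $\Gamma((n+iy)/2)$), and apply the theorem. Your computed constant $\tfrac{1}{2}\cdot 4^{(n+iy)/2}=2^{n+iy-1}$ agrees with the paper's.
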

\begin{proof}
We consider the Gaussian $w$ given in Example \ref{example}, ii).
Clearly, $w$ is a strictly positive, radial, radially decreasing function on $\R^n$. Moreover, $\|w\|_1=1$ \cite[P.9]{SW} and $w$ satisfies the comparison condition (\ref{comp}) ( see Example \ref{example}, ii)). To apply Theorem \ref{mt1} all we need is to show that $w$ satisfies (\ref{tauberian}). Now, for all $y\in\R$,
\begin{eqnarray*}
\int_{\R^n}w(x)\|x\|^{iy}dx
&=&(4\pi)^{-\frac{n}{2}}\int_{\R^n}e^{-\frac{\|x\|^2}{4}}\|x\|^{iy}\:dx\\
&=&(4\pi)^{-\frac{n}{2}}\omega_{n-1}\int_{0}^{\infty}e^{-\frac{r^2}{4}}r^{iy}r^{n-1}\:dr\\
&=&(4\pi)^{-\frac{n}{2}}\omega_{n-1}2^{n-1+iy}\int_{0}^{\infty}e^{-t}t^{\frac{iy+n}{2}-1}\:dt\\
&&\:\:\:\:\:\:\:\:(\text{by the substitution $r=2\sqrt{t}$}) \\
&=&(4\pi)^{-\frac{n}{2}}\omega_{n-1}2^{n-1+iy}\Gamma\left(\frac{n+iy}{2}\right),
\end{eqnarray*}
which is nonzero. As
\begin{equation*}
\lim_{t\to 0}u(x_0,t)=\lim_{t\to 0}\mu\ast h_t(x_0)=\lim_{t\to 0}\mu\ast w_{\sqrt{t}}(x_0,t)=\lim_{t\to 0}\mu\ast w_t(x_0)=L,
\end{equation*}
the proof follows by Theorem \ref{mt1}.
\end{proof}
Our next result is a generalization of the result of Repnikov-Eidelman (Theorem \ref{heatrep}) alluded to in the introduction.
\begin{theorem}\label{mtheat}
Suppose $\phi\in L^1(\R^n)$, $\psi\in L^1(\R^n)$ are radial functions with
\begin{equation}\label{int1}
\int_{\R^n}\phi(x)\:dx=\int_{\R^n}\psi(x)\:dx=1.
\end{equation}
Further assume that $\phi$ satisfies the condition (\ref{tauberian}). Suppose $f\in L^{\infty}(\R^n)$ is such that for some $x_0\in\R^n$ and  $L\in\C$,
\begin{equation*}
\lim_{t\to \infty} f\ast\phi_t(x_0)=L.
\end{equation*}
Then
\begin{equation*}
\lim_{t\to \infty} f\ast\psi_t(x_0)=L.
\end{equation*}
\end{theorem}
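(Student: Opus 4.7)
The plan is to recast both $F(t) := f \ast \phi_t(x_0)$ and $G(t) := f \ast \psi_t(x_0)$ as multiplicative convolutions on the group $(0,\infty)$ acting on a single bounded function derived from spherical averages of $f$, and then invoke part (1) of Wiener's theorem (Theorem \ref{wtt}). As a first step I would translate to assume $x_0 = 0$: replacing $f$ by $f(\cdot + x_0)$ preserves $L^{\infty}(\R^n)$ and converts $f \ast \phi_t(x_0)$ into $f \ast \phi_t(0)$, and both $F$ and $G$ are then bounded on $(0,\infty)$ by $\|f\|_{\infty}$. Introduce the spherical average
$$ g(r) = \int_{S^{n-1}} f(-r\omega)\, d\sigma(\omega), \qquad r \in (0,\infty), $$
which by Fubini is defined for a.e.\ $r$ and satisfies $|g(r)| \le \omega_{n-1}\|f\|_{\infty}$ a.e., so $g \in L^{\infty}(0,\infty)$.

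Next I would pass $\phi_t$ into polar coordinates and make the substitution $s = t/r$ to obtain, with $\tilde h(s) := s^{-n}\phi(1/s)$,
$$ F(t) = \int_0^{\infty} g(r)\left(\tfrac{r}{t}\right)^{n}\phi\!\left(\tfrac{r}{t}\right)\frac{dr}{r} = \int_0^{\infty} \tilde h(s)\, g(t/s)\,\frac{ds}{s} = (\tilde h \ast_{(0,\infty)} g)(t), $$
and the same calculation with $\psi$ in place of $\phi$ yields $G(t) = (\tilde h' \ast_{(0,\infty)} g)(t)$ for $\tilde h'(s) := s^{-n}\psi(1/s)$. The substitution $u = 1/s$ shows $\tilde h, \tilde h' \in L^{1}((0,\infty),dt/t)$ with $\widehat{\tilde h}(0) = \widehat{\tilde h'}(0) = \omega_{n-1}^{-1}$, and more generally
$$ \widehat{\tilde h}(y) = \int_0^{\infty}\phi(u)\, u^{n-1+iy}\,du = \omega_{n-1}^{-1}\int_{\R^n}\phi(x)\|x\|^{iy}\,dx, $$
which by the Tauberian hypothesis (\ref{tauberian}) on $\phi$ is nonvanishing for every $y \in \R$.

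Finally I would apply Theorem \ref{wtt}(1) with kernel $K = \tilde h$ and bounded function $g$. The assumption $F(t) \to L$ as $t \to \infty$ reads $\lim_{t\to\infty}(\tilde h \ast_{(0,\infty)} g)(t) = L = (L\omega_{n-1})\widehat{\tilde h}(0)$, so the theorem yields $\lim_{t\to\infty}(\eta \ast_{(0,\infty)} g)(t) = L\omega_{n-1}\widehat{\eta}(0)$ for every $\eta \in L^{1}((0,\infty),dt/t)$. Specializing to $\eta = \tilde h'$ gives $\lim_{t\to\infty} G(t) = L\omega_{n-1}\cdot\omega_{n-1}^{-1} = L$, which is the desired conclusion. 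The one delicate point is the Mellin identity linking $\widehat{\tilde h}$ to the integral in (\ref{tauberian}); it is precisely this identity that converts the Tauberian hypothesis on $\phi$ into the nonvanishing hypothesis required by Wiener's theorem, and everything else is a polar-coordinate bookkeeping.
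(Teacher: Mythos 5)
Your proposal is correct and follows essentially the same route as the paper: polar coordinates convert $f\ast\phi_t(x_0)$ and $f\ast\psi_t(x_0)$ into multiplicative convolutions of the bounded spherical average with the kernels $s^{-n}\phi(1/s)$ and $s^{-n}\psi(1/s)$, the Mellin transform identity turns hypothesis (\ref{tauberian}) into the nonvanishing condition, and Theorem \ref{wtt}(1) finishes the argument. The only (cosmetic) difference is that you translate to $x_0=0$ first, whereas the paper keeps $x_0$ inside the spherical average $f_0$.
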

 \begin{proof}
Using polar coordinates we write
 \begin{eqnarray}\label{fphi}
 f\ast\phi_t(x_0)&=&t^{-n}\int_{\R^n}f(x_0-x)\phi\left(\frac{x}{t}\right)\:dx\nonumber\\
 &=&t^{-n}\int_{0}^{\infty}\int_{S^{n-1}}f(x_0-r\omega)\:d\sigma(\omega)\phi\left(\frac{r}{t}\right)r^{n-1}\:dr\nonumber\\
 &=&\int_{0}^{\infty}f_0(r)\phi\left(\frac{r}{t}\right)\left(\frac{r}{t}\right)^n\:\frac{dr}{r},
\end{eqnarray}
where \begin{equation*}
f_0(r)=\int_{S^{n-1}}f(x_0-r\omega)\:d\sigma(\omega),\:\:\:\:r> 0,
\end{equation*}
with $\sigma$ being the rotation invariant measure on the unit sphere $S^{n-1}$. Clearly, $f_0$ is a bounded function on $(0,\infty)$. We set
 \begin{equation*}
 g_{\phi}(s)=s^{-n}\phi(s^{-1}),\:\:\:\:s>0.
 \end{equation*}
 From (\ref{fphi}) we get the relation
 \begin{equation}\label{gphi}
 f\ast\phi_t(x_0)=f_0\ast_{(0,\infty)}g_{\phi}(t),\:\:\:\:t>0.
 \end{equation}
 A similar computation shows that
\begin{equation}\label{gpsi}
 f\ast\psi_t(x_0)=f_0\ast_{(0,\infty)}g_{\psi}(t),\:\:\:\:t>0,
\end{equation}
where
\begin{equation*}
g_{\psi}(s)=s^{-n}\psi(s^{-1}),\:\:\:\:s>0.
\end{equation*}
Since $\phi$, $\psi$ are radial and integrable functions on $\R^n$ it follows that $g_{\phi}$ and $g_{\psi}$ belong to the space $L^1((0,\infty), \frac{ds}{s})$. Moreover, by (\ref{int1})
\begin{equation}\label{ft0}
\int_0^{\infty}g_{\phi}(s)\:\frac{ds}{s}=\int_0^{\infty}g_{\psi}(s)\:\frac{ds}{s}=\frac{1}{\omega_{n-1}}.
\end{equation}
A simple calculation as in the proof of Theorem \ref{mt1}, shows that the Fourier transform of $g_{\phi}$ on the multiplicative group  $(0,\infty)$ satisfies
\begin{equation*}
\what g_{\phi}(y)=\int_{0}^{\infty}g_{\phi}(s)s^{-iy}\:\frac{ds}{s}=\frac{1}{\omega_{n-1}}\int_{\R^n}\phi(x)\|x\|^{iy}\:dx\neq 0,
\end{equation*}
for all $y\in\R$, as $\phi$ satisfies (\ref{tauberian}).
Using the equations (\ref{gphi}) and (\ref{ft0}), it follows from the hypothesis that
\begin{equation*}
\lim_{t\to \infty}f_0\ast_{(0,\infty)}g_{\phi}(t)=\lim_{t\to \infty} f\ast\phi_t(x_0)=L=L\omega_{n-1}\what{g_{\phi}}(0).
\end{equation*}
From Wiener's Tauberian theorem (Theorem \ref{wtt}) and (\ref{ft0}) it follows that
\begin{equation*}
\lim_{t\to \infty}f_0\ast_{(0,\infty)}g_{\psi}(t)=L\omega_{n-1}\what{g_{\psi}}(0)=L.
\end{equation*}
An application of the relation (\ref{gpsi}) completes the proof.
\end{proof}
\begin{remark}\label{repnikov}	
\begin{enumerate}
\item To deduce Theorem \ref{heatrep} from Theorem \ref{mtheat},
we choose $\phi=w$  and $\psi=m(B(0,1))^{-1}\chi_{B(0,1)}$. We have already shown  in the proof of Corollary \ref{heatpositive} that $\phi$ satisfies all the conditions of Theorem \ref{mtheat}. We observe that $\psi$ is a nonnegative, radial and integrable function on $\R^n$ with $\|\psi\|_1=1$. Hence, to deduce Theorem \ref{heatrep} it suffices for us to show that $\psi$ satisfies (\ref{tauberian}). Now, for all $y\in\R$
\begin{eqnarray*}
\int_{\R^n}\psi(x)\|x\|^{iy}\:dx&=& m(B(0,1))^{-1}\int_{B(0,1)}\|x\|^{iy}\:dx\\
&=& m(B(0,10)^{-1}\omega_{n-1}\int_{0}^{1}r^{iy+n-1}\:dr\\
&=& m(B(0,1))^{-1}\omega_{n-1}\frac{\Gamma(n+iy)\Gamma(1)}{\Gamma(n+1+iy)},
\end{eqnarray*}
which is nonzero.
Now, suppose $f\in L^{\infty}(\R^n)$ and $x_0\in\R^n$, $L\in\C$. Applying Theorem \ref{mtheat} twice, it follows that
\begin{equation*}
\lim_{t\to\infty} f\ast h_t (x_0)=\lim_{t\to\infty} f\ast w_{\sqrt{t}} (x_0)=\lim_{t\to\infty} f\ast w_t (x_0)=L,
\end{equation*}
if and only if
\begin{equation*}
\lim_{t\to\infty}f\ast \psi_t(x_0)=\lim_{t\to\infty}\frac{1}{m(B(x_0,t))}\int_{B(x_0,t)}f(x)\:dx=L.
\end{equation*}
This proves Theorem \ref{heatrep}.
\item We show by an example that condition (\ref{tauberian}) is necessary for the validity of Theorem \ref{mtheat} as well. Suppose $\phi\in L^1(\R^n)$ is a radial function such that
\begin{equation}\label{againint1}
\int_{\R^n}\phi(x)dx=1.
\end{equation}
Assume that there exists $y_0\in\R$ such that
\begin{equation}\label{ftzero}
\int_{\R^n}\phi(x)\|x\|^{iy_0}dx=0,
\end{equation}
From (\ref{againint1}), it is clear that $y_0$ is nonzero. Consider the function
\begin{eqnarray*}
f(x)&=&\|x\|^{iy_0},\:\:\:\:x\in\R^n\setminus\{0\},\\
&=&1,\:\:\:x=0.
\end{eqnarray*}
Then $f\in L^{\infty}(\R^n)$ and by (\ref{ftzero}), we have that for all $t\in (0,\infty)$
\begin{equation*}
f\ast\phi_t(0)=t^{-n}\int_{\R^n}\|x\|^{iy_0}\phi\left(\frac{x}{t}\right)\:dx
=t^{-n}\int_{\R^n}\|t\xi\|^{iy_0}\phi(\xi)t^n\:d\xi
= t^{iy_0}\int_{\R^n}\phi(\xi)\|\xi\|^{iy_0}\:d\xi
= 0,
\end{equation*}
and hence
\begin{equation*}
\lim_{t\to \infty} f\ast\phi_t(0)=0.
\end{equation*}
As in $(1)$, we again consider the function $\psi=m(B(0,1))^{-1}\chi_{B(0,1)}$. Then $\psi$ is nonnegative, radial with $\|\psi\|_1=1$.
We observe that
\begin{eqnarray*}
f\ast\psi_t(0)&=&m(B(0,1))^{-1}t^{-n}\int_{B(0,t)}f(x)\:dx\\
&=& m(B(0,1))^{-1}t^{-n}\int_{B(0,t)}\|x\|^{iy_0}\:dx\\
&=&\omega_{n-1}m(B(0,1))^{-1}t^{-n}\int_{0}^{t}r^{iy_0}r^{n-1}\:dr\\
&=& \omega_{n-1}m(B(0,1))^{-1}\frac{t^{iy_0}}{n+iy_0}.
\end{eqnarray*}
It follows that $f\ast\psi_t(0)$ does not converge to any limit as $t$ goes to infinity.
\end{enumerate}
\end{remark}
Since a bounded harmonic function $u$ on $\R^{n+1}_+$ is the Poisson integral of a unique boundary function $f\in L^{\infty}(\R^n)$ (see \cite[Theorem 2.5]{SW}), the following result is a simple consequence of Theorem \ref{mtheat}.
\begin{corollary}\label{bddhar}
Suppose $u$ is a bounded harmonic function on $\R^{n+1}_+$ with boundary function $f$. Then for $x_0\in \R^n$ and $L\in\C$,
\begin{equation*}
\lim_{y\to\infty}u(x_0,y)=L,
\end{equation*}
if and only if
\begin{equation*}
\lim_{r\to\infty}\frac{1}{m(B(x_0,r))}\int_{B(x_0,r)}f(x)dx=L.
\end{equation*}
\end{corollary}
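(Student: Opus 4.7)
The plan is to recognize the corollary as a direct application of Theorem~\ref{mtheat}, applied twice with roles of the two kernels exchanged. Since $u$ is a bounded harmonic function on $\R^{n+1}_+$ with boundary function $f\in L^\infty(\R^n)$, we have the representation $u(x_0,y)=f\ast P_y(x_0)$ where $P_y(x)=y^{-n}P(x/y)$ is the dilation of $P(x)=c_n(1+\|x\|^2)^{-(n+1)/2}$, which is radial, strictly positive and satisfies $\|P\|_1=1$. On the averaging side, with $\psi=m(B(0,1))^{-1}\chi_{B(0,1)}$ we have $\psi_r(x)=m(B(0,r))^{-1}\chi_{B(0,r)}(x)$, so that $f\ast\psi_r(x_0)$ is exactly the ball-average of $f$ over $B(x_0,r)$.

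The key verification is the Tauberian condition (\ref{tauberian}) for the Poisson kernel $P$. I would pass to polar coordinates and substitute $s=r^2$ to reduce the Mellin-type integral to a Beta integral,
\begin{equation*}
\int_{\R^n}P(x)\|x\|^{iy}\,dx
=\frac{c_n\omega_{n-1}}{2}\cdot\frac{\Gamma\!\left(\frac{n+iy}{2}\right)\Gamma\!\left(\frac{1-iy}{2}\right)}{\Gamma\!\left(\frac{n+1}{2}\right)},
\end{equation*}
which is nonzero for every $y\in\R$ since the Gamma function never vanishes. The Tauberian condition for $\psi$ was already verified in Remark~\ref{repnikov}(1).

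With both kernels shown to meet the hypotheses of Theorem~\ref{mtheat}, the forward implication follows by applying that theorem with $\phi=P$ and $\psi=m(B(0,1))^{-1}\chi_{B(0,1)}$, and the converse by interchanging the roles (taking $\phi=\psi$ and $\psi=P$). I do not anticipate a genuine obstacle here; the only mildly technical point is the explicit computation of the Mellin transform of $P$, which is routine once the Beta integral is recognized.
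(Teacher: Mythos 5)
Your proposal is correct and follows essentially the same route as the paper: represent $u(x_0,y)=f\ast P_y(x_0)$, verify the Tauberian condition (\ref{tauberian}) for the Poisson kernel $P_1$ (the paper cites the same Gamma-quotient formula from Rudin rather than deriving it via the Beta integral, but the formula is identical) and for $m(B(0,1))^{-1}\chi_{B(0,1)}$ via Remark \ref{repnikov}, then apply Theorem \ref{mtheat} twice with the two kernels in exchanged roles. No gaps.
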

\begin{proof}
We have
\begin{equation*}
u(x,y)=f\ast P_y(x),\:\:x\in\R^n,\: y\in (0,\infty).
\end{equation*}
The function $P_1$ is radial and positive with $\|P_1\|_1=1$ (see \cite[P.9]{SW}). We have for any $s\in\R$ (see \cite[Equation (24)]{Ru}),
 \begin{equation*}
 \int_{\R^n}(1+\|x\|^2)^{-\frac{n+1}{2}}\|x\|^{is}\:dx=c_n\frac{\Gamma\left(\frac{n+is}{2}\right)\Gamma\left(\frac{1-is}{2}\right)}{2\Gamma\left(\frac{n+1}{2} \right)}\neq 0.
 \end{equation*}
This shows that $P_1$ satisfies (\ref{tauberian}). We have already shown in Remark \ref{repnikov}, $(1)$ that the function $m(B(0,1))^{-1}\chi_{B(0,1)}$ also satisfies (\ref{tauberian}). Applying Theorem \ref{mtheat} twice, first with $\phi$=$P_1$ and then with $\phi=m(B(0,1))^{-1}\chi_{B(0,1)}$, we get the result.
\end{proof}

\section{Real hyperbolic spaces}
In this section we will apply Theorem \ref{mt1} and Theorem \ref{mtheat} in the context of real hyperbolic spaces and prove some analogous results for certain eigenfunctions of the Laplace-Beltrami operator. We start with a brief review of some basic facts about real hyperbolic spaces (see \cite{Da, sto}). We  consider the Poincare upper half space model of the $n$-dimensional real hyperbolic space
\begin{equation*}
\mathbb{H}^n=\{(x,y)\mid x\in \R^{n-1}, y\in (0,\infty)\},\:\:\:\:n\geq 2,
\end{equation*}
equipped with the standard hyperbolic metric $ds^2=y^{-2}(dx^2+dy^2)$. The boundary of $\mathbb{H}^n$ is identified with $\R^{n-1}$. The Laplace-Beltrami operator for $\mathbb H^n$ is given by the formula \cite[P. 176]{Da}
\begin{equation*}
\Delta_{\mathbb{H}^n}=y^2\left(\Delta_x+\frac{\partial^2}{\partial y^2}\right)-(n-2)y\frac{\partial}{\partial y}.
\end{equation*}
The expression for the corresponding Poisson kernel $\mathcal{P}$ is given by \cite[P. 76]{sto}
\begin{equation*}
\mathcal{P}\left(x,y\right)=c_n\frac{y^{n-1}}{(y^2+\|x\|^2)^{n-1}},\:\:\:\:(x,y)\in\mathbb{H}^n,
\end{equation*}
where $c_n$ is such that
\begin{equation*}
\int_{\R^{n-1}}\mathcal P (x,y)\:dx=1,\:\:\:\: y\in (0,\infty).
\end{equation*}
We note that if
\begin{equation*}
\phi (x)=\frac{c_n}{(1+\|x\|^2)^{n-1}}, \:\:\:\:x\in \R^{n-1},
\end{equation*}
then for $y\in (0,\infty )$
\begin{equation*}
\phi_y(x)=y^{-(n-1)}\phi\left(\frac{x}{y}\right)=\mathcal P(x,y).
\end{equation*}
It is known that various classes of harmonic functions on $\mathbb H^n$ are Poisson integral of functions or measures defined on the boundary $\R^{n-1}$. One such class is the collection of nonnegative harmonic functions. It is known that given any nonnegative harmonic function $u$ on $\mathbb H^n$ there exists a unique positive measure $\mu$ on $\R^{n-1}$ and a nonnegative constant $A$ such that
\begin{equation*}
u(x,y)=Ay^{n-1}+\int_{\R^{n-1}}\mathcal P (x-\xi, y )\:d\mu (\xi).
\end{equation*}
(see \cite[P. 113]{sto}). It turns out that more general eigenfunctions of $\Delta_{\mathbb H^n}$ can be obtained by considering the generalized Poisson kernel.
For $\lambda\in\C$, the generalized Poisson kernel corresponding to $\lambda$ is given by the formula
\begin{equation}\label{genpoisson}
\mathcal{P}_{\lambda}(x,y)=\left(\frac{\mathcal{P}(x,y)}{\mathcal{P}(0,1)}\right)^{\frac{1}{2}-\frac{i\lambda}{n-1}}=\left[\frac{y^{n-1}}{(y^2+\|x\|^2)^{n-1}} \right]^{\frac{1}{2}-\frac{i\lambda}{n-1}},\:\:\:\:(x,y)\in\mathbb{H}^n.
\end{equation}
It is well-known that for $\lambda\in\C$, the function $\mathcal P_{\lambda}$ is an eigenfunctions of $\Delta_{\mathbb H^n}$ and satisfies the following (see \cite[P. 654]{ADY})
\begin{equation*}
\Delta \mathcal{P}_{\lambda}=-(\lambda^2+\rho^2)\mathcal{P}_{\lambda},\:\:\:\: \rho=\frac{n-1}{2}.
\end{equation*}
From the explicit expression (\ref{genpoisson}) it is easy to see that that for $\text{Im}(\lambda)\in (0,\infty)$, $\mathcal P_{\lambda} (\cdot,y)\in L^1 (\R^{n-1})$,  for all $y\in (0,\infty )$ and
\begin{equation}\label{p1}
\int_{\R^{n-1}}\mathcal{P}_{\lambda}(x,1)\:dx= \frac{\bf c(-\lambda)}{c_n},
\end{equation}
where $\bf c (\lambda)$ is Harish-Chandra $\bf c$-function for $\mathbb{H}^n$ and is given by
\begin{equation}\label{cfun}
{\bf c}(\lambda)=2^{n-1-2i\lambda}\frac{\Gamma(2i\lambda)\Gamma(n/2)}{\Gamma(\frac{n-1}{2})\Gamma(1/2+i\lambda)},\:\:\:\: \text{Im}(\lambda)<0.
\end{equation}
It is clear from the formula that the $\bf c$-function has no pole or zero in the left half plane and hence we can normalize $\mathcal P_{\lambda}$ to define
\begin{equation}\label{normalised}
P_{\lambda}(x,y)=d_{\lambda}\mathcal{P}_{\lambda}(x,y),\:\:\:\:d_{\lambda}=\frac{c_n}{{\bf c}(-\lambda)},\:\:\: \text{Im}(\lambda)>0.
\end{equation}
Using the expression (\ref{genpoisson}) we have the following important observation
\begin{eqnarray}\label{dla2}
P_{\lambda}(x,y)&=& d_{\lambda}\left[\frac{y^{n-1}}{(y^2+\|x\|^2)^{n-1}} \right]^{\frac{1}{2}-\frac{i\lambda}{n-1}}\nonumber\\
&=& y^{\frac{n-1}{2}+i\lambda}y^{-(n-1)}\frac{d_{\lambda}}{(1+\|y^{-1}x\|^2)^{\frac{n-1}{2}-i\lambda}}\nonumber\\
&=& y^{\rho+i\lambda}y^{-(n-1)}P_{\lambda}\left(\frac{x}{y},1\right)\nonumber\\
&=& y^{\rho+i\lambda}\left(\psi^{\lambda}\right)_y(x),
\end{eqnarray}
where
\begin{equation}\label{psilam}
\psi^{\lambda}(x)=P_{\lambda}(x,1),\:\:\:\:\left(\psi^{\lambda}\right)_y(x)=y^{-(n-1)}\psi^{\lambda}\left(\frac{x}{y}\right).
\end{equation}
It follows from (\ref{p1}) and (\ref{normalised}) that for $\text{Im}(\lambda)\in (0,\infty )$,
\begin{equation}\label{intpsilam}
\int_{\R^{n-1}}\psi^{\lambda}(x)\:dx=1.
\end{equation}
Hence, for all $y>0$,
\begin{equation*}
\int_{\R^{n-1}}P_{\lambda}(x,y)\:dx=y^{\rho+i\lambda},\:\:\:\: \text{Im}(\lambda)>0.
\end{equation*}
Using the fact that ${\bf c} (-i\rho )=1$, it follows that for all $(x,y)\in \mathbb H^n$
\begin{equation*}
P_{i\rho}(x,y)=\mathcal P (x,y).
\end{equation*}
For $\text{Im}(\lambda)>0$, we define the Poisson transform of a complex measure (or a signed measure) $\mu$ on $\R^{n-1}$ as the convolution
\begin{equation}\label{ptrans}
P_{\lambda}\mu(x,y)=\int_{\R^{n-1}}P_{\lambda}(x-\xi,y)\:d\mu(\xi),\:\:\:\:(x,y)\in\mathbb H^n,
\end{equation}
whenever the integral exists. It follows that $P_{\lambda}\mu$ also satisfies the eigenvalue equation
\begin{equation*}
\Delta_{\mathbb H^n}P_{\lambda}\mu=-(\lambda^2+\rho^2)P_{\lambda}\mu.
\end{equation*}
The relation (\ref{dla2}) implies that $P_{\lambda}\mu$ can be rewritten as
\begin{equation}\label{psilamcon}
P_{\lambda}\mu(x,y)=y^{\rho+i\lambda}\left(\mu\ast\left(\psi^{\lambda}\right)_y\right)(x),\:\:\:\:x\in\R^{n-1},\: y>0.
\end{equation}
From (\ref{genpoisson}), (\ref{cfun}) and
(\ref{psilam}) it is easy to see that $\psi^{\lambda}(x)$ is  positive for all $x\in\R^{n-1}$, if and only if $\lambda$ is equal to $i\beta$, for some $\beta\in (0,\infty)$. In this case, $P_{\lambda}\mu$ is a nonnegative eigenfunction with eigenvalue $(\beta^2-\rho^2)$, whenever $\mu$ is a positive measure. In fact, we have the following characterization of nonnegative eigenfunctions of $\Delta_{\mathbb H^n}$ \cite[Theorem 7.11]{DR}.
\begin{lemma}\label{dr}
If $u$ is a nonnegative eigenfunction of  $\Delta_{\mathbb{H}^n}$ with eigenvalue $\beta^2-\rho^2$ for some $\beta\in (0,\infty)$, then there exists a unique positive Radon measure $\mu$ on $\R^{n-1}$ and a constant $C\geq 0$, such that
\begin{equation}\label{drexp}
u(x,y)=Cy^{\beta+\rho}+P_{i\beta}\mu(x,y),
\end{equation}	
for all $(x,y)\in\mathbb{H}^n$.
\end{lemma}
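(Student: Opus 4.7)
The statement is a Martin-type representation for positive solutions of the shifted elliptic equation $Lu = 0$, where $L = \Delta_{\mathbb{H}^n} - (\beta^2 - \rho^2)I$. My plan is to realize every nonnegative eigenfunction as a Choquet-type integral over the minimal positive eigenfunctions, and then identify the latter explicitly as the translates of $P_{i\beta}$ together with the function $y^{\beta+\rho}$. First I would check that the two candidate families really are eigenfunctions: a direct computation with $\Delta_{\mathbb{H}^n} = y^2(\Delta_x + \partial_y^2) - (n-2) y \partial_y$ gives $\Delta_{\mathbb{H}^n} y^s = s(s-(n-1)) y^s$, which equals $(\beta^2 - \rho^2) y^s$ precisely when $s = \rho \pm \beta$, and the choice $s = \beta + \rho$ keeps the eigenfunction positive. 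For $P_{i\beta}\mu$, positivity of $\psi^{i\beta}$ on $\R^{n-1}$ together with Remark \ref{existence} shows that finiteness of the convolution at one point implies finiteness everywhere, and differentiating under the integral sign (justified by the polynomial decay of $\psi^{i\beta}$ at infinity) confirms $P_{i\beta}\mu$ is an eigenfunction with the required eigenvalue.

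For existence of the decomposition, I would fix the base point $(0,1)$, normalize $u$ so that $u(0,1) = 1$, and invoke the Harnack inequality for the uniformly elliptic operator $L$ on relatively compact subdomains of $\mathbb{H}^n$. This makes the cone of normalized nonnegative eigenfunctions into a compact convex subset of $C(\mathbb{H}^n)$ in the topology of uniform convergence on compacta. By Choquet's theorem, $u$ is an integral against the extreme rays of this cone. The main obstacle is the identification of these extreme rays. Using the horospherical translations $(x,y) \mapsto (x+a,y)$ and the hyperbolic dilations $(x,y) \mapsto (tx,ty)$ (both isometries of $\mathbb{H}^n$), together with a boundary Harnack principle at each geodesic boundary point, one argues that the minimal positive $(\beta^2-\rho^2)$-eigenfunctions are parametrized by the geometric boundary $\R^{n-1} \cup \{\infty\}$: the finite points $\xi \in \R^{n-1}$ give the translates $P_{i\beta}(\cdot - \xi, \cdot)$, and the point at infinity gives $y^{\beta+\rho}$. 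The other radial solution $y^{\rho-\beta}$ fails to be minimal for $\beta>0$ because its boundary behavior forces it to be a superposition of translates of $P_{i\beta}$.

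Uniqueness of $(\mu,C)$ then follows from the uniqueness clause in the Choquet representation together with the linear independence of the minimal eigenfunctions; alternatively, and more in the spirit of this paper, it may be read off directly from boundary behavior. The constant $C$ is recovered as $\lim_{y\to\infty} y^{-(\beta+\rho)} u(x,y)$, which is independent of $x$ by a Harnack-type argument, and $\mu$ is recovered from $u - Cy^{\beta+\rho}$ by the Euclidean Fatou-type theorem applied to the convolution kernel $\psi^{i\beta}$ via the expression (\ref{psilamcon}). The genuinely hard step in the program is the boundary identification, namely that the Martin boundary of $L$ on $\mathbb{H}^n$ coincides with the geometric boundary and that the minimal Martin kernels are exactly the ones listed; this is classical for the Laplace-Beltrami operator on rank-one symmetric spaces but requires a careful Green-function asymptotic analysis, which is why the paper (rightly) imports the result from \cite{DR} rather than reproving it.
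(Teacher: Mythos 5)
The paper does not prove this lemma at all: it is quoted verbatim from Damek--Ricci \cite[Theorem 7.11]{DR}, so there is no in-paper argument to compare yours against. Judged on its own terms, your proposal is the standard potential-theoretic program (Harnack compactness, Choquet representation over the extreme rays, identification of the Martin boundary with the geometric boundary $\R^{n-1}\cup\{\infty\}$), and the program is correct for this operator: the eigenvalue $\beta^{2}-\rho^{2}$ with $\beta>0$ corresponds to $-\Delta_{\mathbb H^n}u=(\rho^{2}-\beta^{2})u$, strictly below the bottom $\rho^{2}$ of the $L^{2}$-spectrum, which is exactly the regime where the Martin compactification of a rank-one symmetric space is known to coincide with the geometric one. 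Your computation $\Delta_{\mathbb H^n}y^{s}=s(s-(n-1))y^{s}$ and the resulting exponents $s=\rho\pm\beta$ are right (though note both $y^{\rho+\beta}$ and $y^{\rho-\beta}$ are positive; the correct reason $y^{\rho-\beta}$ does not appear as a separate extreme ray is the one you give second, namely $y^{\rho-\beta}=P_{i\beta}m$ with $m$ Lebesgue measure, by the normalization $\int_{\R^{n-1}}P_{i\beta}(x,y)\,dx=y^{\rho-\beta}$).

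The genuine gap is the one you flag yourself: the identification of the minimal positive eigenfunctions with $\{P_{i\beta}(\cdot-\xi,\cdot)\}_{\xi\in\R^{n-1}}\cup\{y^{\beta+\rho}\}$ is the entire content of the theorem, and invoking ``a boundary Harnack principle at each geodesic boundary point'' is a placeholder, not a proof; as written the argument is circular at its key step. Two smaller points also need repair. First, Choquet's theorem gives existence of a representing measure but uniqueness requires knowing the base of the cone is a simplex (true here, but it must be said), and one must convert the finite measure on the Martin boundary into a Radon measure $\mu$ by dividing by the normalization $P_{i\beta}(-\xi,1)$. Second, your alternative uniqueness argument via the pointwise Fatou theorem does not recover the singular part of $\mu$; the standard route is weak-$*$ convergence of $y^{\beta-\rho}\bigl(u(\cdot,y)-Cy^{\beta+\rho}\bigr)\,dx$ to $d\mu$ as $y\to0$ against $C_c(\R^{n-1})$. (Your recovery of $C$ as $\lim_{y\to\infty}y^{-(\beta+\rho)}u(x,y)$ is fine, since for $y\ge1$ one has $y^{-(\rho+\beta)}P_{i\beta}(x-\xi,y)\le d_{i\beta}(1+\|x-\xi\|^{2})^{-(\rho+\beta)}$, which is $\mu$-integrable, so dominated convergence applies.) Since the paper deliberately imports the lemma from \cite{DR} rather than proving it, the reasonable courses are either to cite it as the paper does or to carry out the Martin boundary identification in full; the sketch as it stands does neither.
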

The measure $\mu$ in the theorem above will be called the boundary measure of the eigenfunction $u$.
We are now ready to prove an analogue of Rudin's result (Theorem \ref{rud}) for nonnegative eigenfunctions of $\Delta_{\mathbb H^n}$.
\begin{theorem}\label{mtrealhyper}
Suppose $u$ is a nonnegative eigenfunction of $\Delta_{\mathbb{H}^n}$ with boundary measure $\mu$ and eigenvalue $\beta^2-\rho^2$ for some $\beta>0$. If there exists $x_0\in\R^{n-1}$ and $L\in[0,\infty)$ such that
\begin{equation}\label{eigenlim}
\lim_{y\to 0}y^{\beta-\rho}u(x_0,y)=L,
\end{equation}
then $D_{sym}\mu(x_0)=L$.
\end{theorem}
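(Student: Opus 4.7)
The plan is to reduce the statement to a direct application of Theorem \ref{mt1} with $\phi = \psi^{i\beta}$, the normalized Poisson kernel on $\R^{n-1}$ at height one.

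First I would use Lemma \ref{dr} to write
\[
u(x,y) = Cy^{\beta+\rho} + P_{i\beta}\mu(x,y),
\]
and then apply the identity (\ref{psilamcon}) with $\lambda = i\beta$ (so $y^{\rho+i\lambda} = y^{\rho-\beta}$) to obtain
\[
P_{i\beta}\mu(x,y) = y^{\rho-\beta}\bigl(\mu \ast (\psi^{i\beta})_y\bigr)(x).
\]
Multiplying by $y^{\beta-\rho}$ gives $y^{\beta-\rho}u(x_0,y) = Cy^{2\beta} + (\mu \ast (\psi^{i\beta})_y)(x_0)$. Since $\beta > 0$, the first term vanishes as $y \to 0$, so the hypothesis (\ref{eigenlim}) becomes
\[
\lim_{y\to 0}\bigl(\mu \ast (\psi^{i\beta})_y\bigr)(x_0) = L.
\]

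Next I would check that $\phi := \psi^{i\beta}$ satisfies all three hypotheses of Theorem \ref{mt1} on $\R^{n-1}$. From the definitions (\ref{genpoisson}), (\ref{normalised}), (\ref{psilam}) one has the explicit expression
\[
\psi^{i\beta}(x) = \frac{d_{i\beta}}{(1+\|x\|^2)^{(n-1)/2 + \beta}}, \qquad x\in\R^{n-1},
\]
with $d_{i\beta} > 0$. This is clearly radial, strictly positive and radially decreasing, with unit integral by (\ref{intpsilam}). The comparison condition (\ref{comp}) is then immediate from Example \ref{example}(i), applied in dimension $n-1$ with $\alpha = (n-1)/2 + \beta > (n-1)/2$ and the logarithmic factor absent.

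The main step that requires actual work is verifying the Tauberian nonvanishing condition (\ref{tauberian}): for every $y\in\R$,
\[
\int_{\R^{n-1}}\psi^{i\beta}(x)\,\|x\|^{iy}\,dx \neq 0.
\]
I would pass to polar coordinates and substitute $s = r^2$ to rewrite the integral, up to a positive constant, as a Beta-type integral
\[
\int_0^\infty \frac{s^{(n-3+iy)/2}}{(1+s)^{(n-1)/2+\beta}}\,ds = \frac{\Gamma\bigl(\tfrac{n-1+iy}{2}\bigr)\,\Gamma\bigl(\beta - \tfrac{iy}{2}\bigr)}{\Gamma\bigl(\tfrac{n-1}{2}+\beta\bigr)},
\]
which is a product of Gamma values at arguments with positive real part, hence nonzero. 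This is the one step where a nontrivial computation is needed; everything else is bookkeeping.

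With all three hypotheses verified, Theorem \ref{mt1} applied to the positive Radon measure $\mu$ on $\R^{n-1}$ (whose finiteness of $\mu \ast (\psi^{i\beta})_{y_1}(x_0)$ at some $y_1$ follows from the existence of the Poisson integral defining $u$) yields $D_{\mathrm{sym}}\mu(x_0) = L$, completing the proof.
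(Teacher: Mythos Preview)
Your proposal is correct and follows essentially the same route as the paper: reduce to $\lim_{y\to 0}(\mu\ast(\psi^{i\beta})_y)(x_0)=L$ via Lemma~\ref{dr} and (\ref{psilamcon}), verify that $\psi^{i\beta}$ satisfies all three hypotheses of Theorem~\ref{mt1} (using Example~\ref{example}(i) for (\ref{comp}) and a Beta/Gamma computation for (\ref{tauberian})), and apply that theorem. The only cosmetic difference is that the paper evaluates the Mellin-type integral via the substitution $r=\tan\theta$ and the formula (\ref{betafn}), whereas you use $s=r^2$ and the standard Beta integral; both yield the same nonvanishing Gamma quotient.
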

\begin{proof}
As $\mu$ is the boundary measure of $u$, the expressions (\ref{drexp}) and (\ref{psilamcon}) imply that
\begin{equation*}
u(x,y)=Cy^{\beta+\rho}+y^{\rho-\beta}\left(\mu\ast\left(\psi^{i\beta}\right)_y\right)(x),\:\:\:\:x\in\R^{n-1},\:y>0.
\end{equation*}
Hence,
\begin{equation*}
y^{\beta-\rho}u(x,y)=Cy^{2\beta}+\left(\mu\ast\left(\psi^{i\beta}\right)_y\right)(x)\:\:\:\:x\in\R^{n-1},\:y>0.
\end{equation*}
By the hypothesis (\ref{eigenlim}) and the fact that $\beta>0$, it follows that
\begin{equation*}
\lim_{y\to 0}\left(\mu\ast\left(\psi^{i\beta}\right)_y\right)(x_0)=\lim_{y\to 0}y^{\beta-\rho}u(x_0,y)=L.
\end{equation*}
We recall from (\ref{psilam}) that
\begin{equation*}
\psi^{i\beta}(x)=d_{i\beta}\left(\frac{1}{1+\|x\|^2}\right)^{\rho+\beta},\:x\in\R^{n-1}.
\end{equation*}
It is clear from the expression above that $\psi^{i\beta}$ is a strictly positive, radial and radially decreasing function on $\R^{n-1}$. Moreover, by Example \ref{example}, we have that $\psi^{i\beta}$ satisfies the comparison condition (\ref{comp}). We now need to check that $\psi^{i\beta}$ satisfies the condition (\ref{tauberian}) of Theorem \ref{mt1}. To do this, we will need the following well-known formula
\begin{equation}\label{betafn}
\int_0^{\pi/2}(\sin\theta )^z(\cos\theta)^w\:d\theta= \frac{\Gamma\left(\frac{z+1}{2}\right)\Gamma\left(\frac{w+1}{2}\right)}{2\Gamma\left(\frac{z+w+2}{2}  \right)},\:\:\:\:\text{Re}(z)>-1,\:\text{Re}(w)>-1.
\end{equation}
Now, for any $\lambda\in\C$ with $\text{Im}(\lambda)>0$, and $s\in\R$ we have
\begin{eqnarray}\label{psilmtb}
\int_{\R^{n-1}}\psi^{\lambda}(x)\|x\|^{is}\:dx
&=&d_{\lambda}\int_{\R^{n-1}}\left(\frac{1}{1+\|x\|^2}\right)^{\rho-i\lambda}\|x\|^{is}\:dx\nonumber\\
&=&d_{\lambda}\omega_{n-2}\int_{0}^{\infty}\left(\frac{1}{1+r^2}\right)^{\rho-i\lambda}r^{is}r^{n-2}\:dr\nonumber\\
&=&d_{\lambda}\omega_{n-2}\int_{0}^{\pi/2}\left(\frac{1}{\sec^2\theta}\right)^{\rho-i\lambda}(\tan\theta)^{n-2+is}(\sec\theta )^2 \:d\theta\nonumber\\
&&\:\:\:\:\:\:\text{(using the substitution $r=\tan\theta$)}\nonumber\\
&=&d_{\lambda}\omega_{n-2}\int_{0}^{\pi/2}
(\cos\theta)^{(2\rho-2i\lambda-n-is)}(\sin\theta)^{n-2+is}\:d\theta\nonumber\\
&=&d_{\lambda}\omega_{n-2}\frac{\Gamma\left(\frac{2\rho-2i\lambda-n-is+1}{2}\right)\Gamma\left(\frac{n-1+is}{2}\right)}{2\Gamma(\rho-i\lambda)},
\end{eqnarray}
where the last equality follows from (\ref{betafn}) as $\text{Im}(\lambda)\in (0,\infty)$ . As, the expression on the right-hand side of (\ref{psilmtb}) is nonzero, it follows that $\psi^{i\beta}$ satisfies (\ref{tauberian}). In view of (\ref{intpsilam}), the proof now follows simply by applying Theorem \ref{mt1}.
\end{proof}

The last topic we are going to discuss is related to Theorem \ref{heatrep}. It is known that the exact analogue of the result of Repnikov-Eidelman (Theorem \ref{heatrep}) is false on $\mathbb H^n$ (see \cite{NRS, Re2}). However, an analogue of Corollary \ref{bddhar} (which we view as a variant of Theorem \ref{heatrep}) can be proved for $\mathbb H^n$.

We define for $\lambda\in\C$ and $f$ a measurable function on $\R^{n-1}$, the Poisson transform $P_{\lambda}f$ by a convolution analogous to (\ref{ptrans})
\begin{equation*}
P_{\lambda}f(x,y)=\int_{\R^{n-1}}P_{\lambda}(x-\xi,y)f(\xi)\:d\xi,\:\:(x,y)\in\mathbb{H}^n,
\end{equation*}
whenever the integral makes sense. We note that for $\text{Im}(\lambda)\in (0,\infty)$, the kernel $P_{\lambda}(\cdot ,y)$ is integrable for every $y\in (0,\infty)$ and hence the Poisson transform $P_{\lambda}f$ is well defined for $f\in L^{\infty}(\R^{n-1})$.
We will now prove an analogue of Corollary \ref{bddhar} for certain eigenfunctions of $\Delta_{\mathbb{H}^n}$. In order to do this,
we will need the following characterization of eigenfunctions of $\Delta_{\mathbb H^n}$ (see \cite[Theorem 3.6]{B}).
\begin{lemma}
	Suppose $u$ is an eigenfunction of $\Delta_{\mathbb{H}^n}$ with eigenvalue $-(\lambda^2+\rho^2)$, where $\text{Im}(\lambda)\in (0,\infty)$. Then $u=P_{\lambda}f$ for some $f\in L^{\infty}(\R^{n-1})$ if and only if
	\begin{equation}\label{hardynorm}
	\sup_{y>0}y^{\text{Im}(\lambda)-\rho}\|u(.,y)\|_{L^{\infty}(\R^{n-1})}<\infty.
	\end{equation}
\end{lemma}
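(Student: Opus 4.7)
My plan is to establish the equivalence by a direct convolution estimate in the forward direction and a compactness-plus-uniqueness argument in the converse.

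For the forward direction, if $u = P_\lambda f$ with $f \in L^\infty(\R^{n-1})$, identity (\ref{psilamcon}) gives $u(x,y) = y^{\rho + i\lambda}(f \ast (\psi^\lambda)_y)(x)$. The explicit formula (\ref{psilam}) yields $|\psi^\lambda(x)| = |d_\lambda|(1 + \|x\|^2)^{-(\rho + \text{Im}\,\lambda)}$, which is integrable on $\R^{n-1}$ precisely because $\text{Im}\,\lambda > 0$. Scale invariance gives $\|(\psi^\lambda)_y\|_1 = \|\psi^\lambda\|_1$ for all $y > 0$, so Young's inequality yields $\|u(\cdot,y)\|_\infty \leq y^{\rho - \text{Im}\,\lambda}\|f\|_\infty\|\psi^\lambda\|_1$, which is exactly (\ref{hardynorm}).

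For the converse, set $M := \sup_{y>0} y^{\text{Im}\,\lambda - \rho}\|u(\cdot,y)\|_\infty$ and define $F(x,y) := y^{-\rho - i\lambda} u(x,y)$; then $\|F(\cdot,y)\|_\infty \leq M$ uniformly in $y > 0$. Since $L^\infty(\R^{n-1}) = (L^1(\R^{n-1}))^*$, Banach--Alaoglu supplies a sequence $y_k \downarrow 0$ and $f \in L^\infty(\R^{n-1})$ with $\|f\|_\infty \leq M$ such that $F(\cdot, y_k) \to f$ in the weak-* topology. The remaining task is to prove that $u = P_\lambda f$.

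This last identification is the main obstacle and amounts to a uniqueness principle for eigenfunctions with the prescribed growth. My plan is to Fourier-transform in the $x$-variable: for each fixed frequency $\xi \neq 0$, the substitution $\hat u(\xi,y) = y^\rho w(\xi, y)$ with $z = \|\xi\|\, y$ reduces the eigenvalue equation to the modified Bessel equation $z^2 w_{zz} + z w_z - (z^2 - \lambda^2) w = 0$, whose two independent solutions $I_{\pm i\lambda}(z)$ and $K_{i\lambda}(z)$ are distinguished by exponential growth vs.\ decay as $z \to \infty$. The polynomial bound (\ref{hardynorm}) at $y \to \infty$ rules out the exponentially growing branch, forcing $\hat u(\xi,y) = c(\xi)\, y^\rho K_{i\lambda}(\|\xi\|\, y)$ for a unique coefficient $c(\xi)$; the weak-* limit $f$ determines $c(\xi)$ through the small-$y$ asymptotics of $K_{i\lambda}$, and the same computation applied to $\widehat{P_\lambda f}$ via (\ref{psilamcon}) produces the same coefficient, giving $\hat u = \widehat{P_\lambda f}$ and hence $u = P_\lambda f$. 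The technical nuisance is that $u(\cdot, y)$ is only in $L^\infty$, hence only a tempered distribution; this is handled by first regularizing $u$ against a Schwartz approximate identity in the $x$-variable, performing the Fourier analysis on the regularization, and then removing it.
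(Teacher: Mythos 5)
First, note that the paper does not prove this lemma at all: it is quoted verbatim from Ben Sa\"{i}d--Oshima--Shimeno \cite[Theorem 3.6]{B}, so there is no in-paper proof to compare against. Judged on its own terms, your forward direction is correct and complete: $|\psi^{\lambda}(x)|=|d_{\lambda}|(1+\|x\|^2)^{-(\rho+\mathrm{Im}\,\lambda)}$ is integrable exactly because $\mathrm{Im}\,\lambda>0$, the dilation $(\psi^{\lambda})_y$ preserves the $L^1$ norm, and Young's inequality gives (\ref{hardynorm}). The converse, however, is where the entire content of the lemma lives, and your sketch has a genuine gap at its central step.

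The problem is the passage from the distributional Fourier transform to a pointwise ODE in $y$ for each fixed $\xi\neq 0$. Since $u(\cdot,y)$ is only in $L^{\infty}$, $\widehat{u}(\cdot,y)$ is a tempered distribution, and your proposed fix --- convolving $u$ in $x$ with a Schwartz approximate identity --- does not resolve this: the regularization $u(\cdot,y)\ast\chi$ is still merely bounded, and its Fourier transform $\widehat{u}(\cdot,y)\,\widehat{\chi}$ is still only a distribution, so ``$\widehat{u}(\xi,y)=c(\xi)y^{\rho}K_{i\lambda}(\|\xi\|y)$ for a unique coefficient $c(\xi)$'' is not yet meaningful. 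Pairing against a test function $\widehat{\varphi}$ does not help directly either, because $\Delta_x$ becomes multiplication by $-\|\xi\|^2$, which does not commute with the pairing, so $\langle\widehat{u}(\cdot,y),\widehat{\varphi}\rangle$ satisfies no scalar ODE. To make this route work you must solve a distribution-valued ODE on $\R^{n-1}\setminus\{0\}$ (obtaining distributional coefficients $a,b$ of the $I_{\pm i\lambda}$ and $K_{i\lambda}$ branches via the Wronskian, which involves multiplication by functions smooth only away from $\xi=0$), then show that the polynomial bound forces the distribution $a$ to vanish --- which requires an argument, since a finite-order distribution paired against the exponentially growing family could in principle exhibit cancellation --- and finally dispose of the part of $\widehat{u}$ supported at $\xi=0$ (polynomials in $x$ times powers of $y$), which is easy but must be said. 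Each of these is doable and the overall strategy (kill the $I$-branch by the growth at $y\to\infty$, identify $c$ with $\widehat{f}$ via the small-argument asymptotics of $K_{i\lambda}$ and the weak-$*$ limit) is a legitimate classical route to such Hardy-type characterizations; but as written, the decisive uniqueness step is asserted rather than proved, so the proposal does not yet constitute a proof of the converse.
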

We shall call $f$ to be the boundary function of $u$.
The following result, for $\lambda=i\rho$ can be thought of as an exact analogue of Corollary \ref{bddhar}.
\begin{theorem}\label{largetime}
	Suppose $u$ is an eigenfunction of $\Delta_{\mathbb{H}^n}$ with eigenvalue $-(\lambda^2+\rho^2)$, where $\text{Im}(\lambda)>0$. Further suppose that $u$ satisfies ($\ref{hardynorm}$) and $f$ is the boundary function of $u$. Then for $x_0\in\R^{n-1}$ and $L\in\C$
	\begin{equation*}
	\lim_{y\to\infty}y^{-(\rho+i\lambda)}u(x_0,y)=L
	\end{equation*}
	if and only if
	\begin{equation*}
	\lim_{r\to\infty}\frac{1}{m(B(x_0,r))}\int_{B(x_0,r)}f(x)dx=L.
	\end{equation*}
\end{theorem}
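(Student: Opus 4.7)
The plan is to reduce the theorem to two applications of Theorem~\ref{mtheat}, in direct analogy with the proof of Corollary~\ref{bddhar} in the Euclidean setting. Since $u$ satisfies (\ref{hardynorm}), the characterization lemma gives $u = P_\lambda f$ for the boundary function $f \in L^\infty(\R^{n-1})$. The factorization (\ref{dla2}) together with (\ref{psilamcon}) yields
\begin{equation*}
y^{-(\rho+i\lambda)} u(x_0, y) = \bigl(f \ast (\psi^\lambda)_y\bigr)(x_0), \qquad y > 0.
\end{equation*}
Writing $\eta(x) = m(B(0,1))^{-1}\chi_{B(0,1)}(x)$, a direct computation gives
\begin{equation*}
(f \ast \eta_r)(x_0) = \frac{1}{m(B(x_0,r))}\int_{B(x_0,r)} f(x)\, dx.
\end{equation*}
Hence the theorem reduces to proving the equivalence
\begin{equation*}
\lim_{y\to\infty} \bigl(f \ast (\psi^\lambda)_y\bigr)(x_0) = L \;\;\Longleftrightarrow\;\; \lim_{r\to\infty} (f \ast \eta_r)(x_0) = L.
\end{equation*}

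Next, I would verify that both $\psi^\lambda$ and $\eta$ meet the hypotheses of Theorem~\ref{mtheat}. Both are radial $L^1$ functions with integral equal to $1$: for $\psi^\lambda$ this is exactly (\ref{intpsilam}) (and integrability follows from $\mathrm{Im}(\lambda) > 0$), and for $\eta$ it is obvious. For the Tauberian condition (\ref{tauberian}), the explicit calculation (\ref{psilmtb}) already carried out in the proof of Theorem~\ref{mtrealhyper} gives
\begin{equation*}
\int_{\R^{n-1}} \psi^\lambda(x) \|x\|^{is}\, dx = d_\lambda \omega_{n-2}\,\frac{\Gamma\!\left(\tfrac{2\rho - 2i\lambda - n - is + 1}{2}\right)\Gamma\!\left(\tfrac{n-1+is}{2}\right)}{2\,\Gamma(\rho - i\lambda)},
\end{equation*}
which never vanishes since the Gamma function is zero-free. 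The corresponding nonvanishing for $\eta$ was already recorded in Remark~\ref{repnikov}(1).

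With both verifications in hand, one application of Theorem~\ref{mtheat} with $\phi = \psi^\lambda$ and $\psi = \eta$ yields the forward implication, and a second application with $\phi = \eta$ and $\psi = \psi^\lambda$ yields the reverse. The mild subtlety worth flagging is that $\psi^\lambda$ is in general complex-valued when $\lambda$ is not purely imaginary, but Theorem~\ref{mtheat} is stated for complex-valued $L^1$ kernels, so no extra hypothesis is needed. I do not anticipate a serious obstacle: the heavy lifting lies upstream, in Wiener's Tauberian theorem used to prove Theorem~\ref{mtheat}, and in the Gamma-function identity (\ref{psilmtb}) establishing the Tauberian condition for $\psi^\lambda$.
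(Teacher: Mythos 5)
Your proposal is correct and follows essentially the same route as the paper's own proof: reduce via (\ref{dla2}) to the two convolution identities, verify the hypotheses of Theorem~\ref{mtheat} for $\psi^{\lambda}$ using (\ref{intpsilam}) and (\ref{psilmtb}) and for the normalized indicator using Remark~\ref{repnikov}, then apply Theorem~\ref{mtheat} twice. Your remark that $\psi^{\lambda}$ may be complex-valued is a fair observation, and indeed Theorem~\ref{mtheat} imposes no positivity on the kernels.
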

\begin{proof}
	Since $f$ is the boundary function of $u$, we have
	\begin{equation*}
	u(x,y)=P_{\lambda}f(x)=\int_{\R^{n-1}}P_{\lambda}(x-\xi,y )f(\xi)\:d\xi,\:\:(x,y)\in\mathbb{H}^n.
	\end{equation*}
The equation (\ref{dla2}) now implies that
	\begin{equation*}
y^{-(\rho+i\lambda)}u(x_0,y)=\left(f\ast\left(\psi^{\lambda}\right)_y\right)(x_0).
	\end{equation*}
We also have
	\begin{equation*}
	\frac{1}{m(B(x_0,r))}\int_{B(x_0,r)}f(x)dx=f\ast\left(m(B(0,1))^{-1}\chi_{B(0,1)}\right)_r(x_0).
	\end{equation*}
	Now, (\ref{psilmtb}) shows that $\psi^{\lambda}$ satisfies (\ref{tauberian}). Also, from (\ref{psilam}) and (\ref{intpsilam}), we have that $\psi^{\lambda}$ is radial and is of integral one. We have already observed in Remark \ref{repnikov} that $m(B(0,1))^{-1}\chi_{B(0,1)}$ also obeys (\ref{tauberian}). Application of Theorem \ref{mtheat} twice, first with $\phi$=$\psi^{\lambda}$ and then with $\phi=m(B(0,1))^{-1}\chi_{B(0,1)}$ finishes the proof.
\end{proof}
\begin{remark}
It is known that the analogue of Theorem \ref{mtrealhyper} (for $\beta=\rho$) is false for complex hyperbolic spaces (see \cite[P.78]{Ru2}). However, it is not clear to us, at the moment, whether the exact analogue of Theorem \ref{largetime} holds for complex hyperbolic spaces.
\end{remark}
\section*{acknowledgements}
The author would like to thank Swagato K. Ray for suggesting this problem and for many
useful discussions during the course of this work.


\begin{thebibliography}{99}
\bibitem{ADY}  Anker, Jean-Philippe; Damek, Ewa; Yacoub, Chokri {\em Spherical analysis on harmonic AN groups}. Ann. Scuola Norm. Sup. Pisa Cl. Sci. (4) 23 (1996), no. 4, 643–679 (1997)
\bibitem{B} Ben Saïd, Salem; Oshima, Toshio; Shimeno, Nobukazu {\em Fatou's theorems and Hardy-type spaces for eigenfunctions of the invariant differential operators on symmetric spaces}. Int. Math. Res. Not. 2003, no. 16, 915–931.
\bibitem{BC} Brossard, Jean; Chevalier, Lucien {\em Problème de Fatou ponctuel et dérivabilité des mesures}. Acta Math. 164 (1990), no. 3-4, 237–263.
\bibitem{CD} Carmona, Joan J.; Donaire, Juan J. {\em The converse of Fatou's theorem for Zygmund measures}. Pacific J. Math. 191 (1999), no. 2, 207–222.
\bibitem{DR} Damek, Ewa; Ricci, Fulvio {\em Harmonic analysis on solvable extensions of H-type groups}. J. Geom. Anal. 2 (1992), no. 3, 213–248.
\bibitem{Da} Davies, E. B. {\em Heat kernels and spectral theory.} Cambridge Tracts in Mathematics, 92. Cambridge University Press, Cambridge, 1989.
\bibitem{Du} Dubtsov, E. S. {\em The converse of the Fatou theorem for smooth measures}. (Russian) Zap. Nauchn. Sem. S.-Peterburg. Otdel. Mat. Inst. Steklov. (POMI) 315 (2004), Issled. po Lineĭn. Oper. i Teor. Funkts. 32, 90–95, 157; translation in J. Math. Sci. (N.Y.) 134 (2006), no. 4, 2288–2291.
\bibitem{G1} Gehring, F. W. {\em The Fatou theorem and its converse}. Trans. Amer. Math. Soc. 85 (1957), 106–121.
\bibitem{G} Gehring, F. W. {\em The boundary behavior and uniqueness of solutions of the heat equation}. Trans. Amer. Math. Soc. 94 (1960), 337–364.
\bibitem{Kh} Kheifits, Alexander I. {\em Pointwise Fatou theorem for generalized harmonic functions—normal boundary values}. Potential Anal. 3 (1994), no. 4, 379–389.
\bibitem{Lo} Logunov, A. A. {\em On the boundary behavior of positive solutions of elliptic differential equations}. (Russian) Algebra i Analiz 27 (2015), no. 1, 125–148; translation in St. Petersburg Math. J. 27 (2016), no. 1, 87–102
\bibitem{L} Loomis, Lynn H. {\em The converse of the Fatou theorem for positive harmonic functions}. Trans. Amer. Math. Soc. 53, (1943). 239–250.
\bibitem{UR} Ramey, Wade; Ullrich, David {\em On the behavior of harmonic functions near a boundary point}. Trans. Amer. Math. Soc. 305 (1988), no. 1, 207–220.
\bibitem{NRS} Naik, Muna; Ray, Swagato K.; Sarkar, Rudra P. {\em Large time behaviour of heat propagator}, Preprint, 2020.
\bibitem{Re}  Repnikov, V. D.; Eidelman, S. D. {\em Necessary and sufficient conditions for establishing a solution to the Cauchy problem}.  Dokl. Akad. Nauk SSSR 167 (1966), 298–301.
\bibitem{Rp} Repnikov, V. D.; Eidelman, S. D. {\em A new proof of the theorem on the stabilization of the solution of the Cauchy problem for the heat equation}. Mat. Sb. (N.S.) 73 (115) 1967 155–159.
\bibitem{Re2} Repnikov, V. D. {\em On the stabilization of the solution of the Cauchy problem for the heat equation in the Bolyai-Lobachevskiĭ plane}. (Russian) Differ. Uravn. 38 (2002), no. 2, 262–270, 287; translation in Differ. Equ. 38 (2002), no. 2, 279–287
\bibitem{Ru}  Rudin, Walter {\em Tauberian theorems for positive harmonic functions}. Nederl. Akad. Wetensch. Indag. Math. 40 (1978), no. 3, 376–384.
\bibitem{Ru2}  Rudin, Walter {\em Function theory in the unit ball of $\C^n$}. Reprint of the 1980 edition. Classics in Mathematics. Springer-Verlag, Berlin, 2008.
\bibitem{Ruf} Rudin, Walter {\em Functional analysis}. Second edition. International Series in Pure and Applied Mathematics. McGraw-Hill, Inc., New York, 1991.
\bibitem{Sa}  Saeki, S. {\em On Fatou-type theorems for non-radial kernels}. Math. Scand. 78 (1996), no. 1, 133–160.
\bibitem{SW} Stein, Elias M.; Weiss, Guido {\em Introduction to Fourier analysis on Euclidean spaces}. Princeton Mathematical Series, No. 32. Princeton University Press, Princeton, N.J., 1971.
\bibitem{sto} Stoll, Manfred {\em Harmonic and subharmonic function theory on the hyperbolic ball.} London Mathematical Society Lecture Note Series, 431. Cambridge University Press, Cambridge, 2016.
\bibitem{W}  Watson, N. A. {\em Differentiation of measures and initial values of temperatures}. J. London Math. Soc. (2) 16 (1977), no. 2, 271–282.
\bibitem{W1} Watson, Neil A. {\em Introduction to heat potential theory}. Mathematical Surveys and Monographs, 182. American Mathematical Society, Providence, RI, 2012.
\end{thebibliography}
\end{document}